\documentclass[12pt]{amsart}
\usepackage{times}
\usepackage{amsfonts}
\usepackage{amssymb}
\usepackage{bbm}
\usepackage{times}
\usepackage{amssymb}
\usepackage{amscd}
\usepackage{graphicx}

\usepackage{amsmath}
\usepackage{amssymb}

\usepackage{amsmath}
\usepackage{amsfonts}
\usepackage{amscd}
\usepackage{latexsym}
\usepackage{amsthm}
\usepackage{amssymb}
\usepackage{amsmath}
\usepackage{setspace}
\theoremstyle{plain}
\newtheorem{theorem}{Theorem}[section]
\newtheorem{corollary}[theorem]{Corollary}

\newtheorem{definition}[theorem]{Definition}

\begin{document}

\vskip 0.5cm

\title[generalized tracially approximated ${\rm C^*}$-algebras] {certain properties of weakly tracially approximated ${\rm \textbf{C}^*}$-algebras}
\author{Qingzhai Fan and Jiahui Wang}

\address{Qingzhai Fan\\ Department of Mathematics\\  Shanghai Maritime University\\
Shanghai\\China
\\  201306 }

\address{Jiahui  Wang\\ Department of Mathematics\\ Shanghai Maritime University\\
Shanghai\\China
\\  201306 }
\email{641183019@qq.com}

\thanks{{\bf Key words}  ${\rm C^*}$-algebras, tracial approximation, Cuntz semigroup.}
\thanks{2000 \emph{Mathematics Subject Classification\rm{:}} 46L35, 46L05, 46L80}

\begin{abstract} Let  $\Omega$ be a class of unital ${\rm C^*}$-algebras. Let  ${\rm WTA}\Omega$ denote the class
 of ${\rm C^*}$-algebras which can be weakly  tracially approximated by ${\rm C^*}$-algebras in $\Omega$. We show that the following properties of unital ${\rm C^*}$-algebra in a class of $\Omega$ are preserved by unital simple ${\rm C^*}$-algebra in the class of $\rm WTA\Omega$:
 $(1)$ uniform property $\Gamma$, $(2)$  a type of tracial nuclear dimension at most $n$,  $(3)$  weakly  $(m, n)$-divisible.
\end{abstract}

\maketitle

\section{Introduction}

A crucial step in the classification of nuclear stably finite unital separable ${\rm C^*}$-algebras (the classification of of nuclear
${\rm C^*}$-algebras are called Elliott program, which have begun with the ${\rm K}$-theoretical
classification of AF algebras in \cite{E1})
 was Lin's  abstract tracial approximation structure  which inspired by Elliott-Gong's decomposition theorem  (\cite{E6}) and Gong's decomposition theorem (\cite{G1}) for simple AH algebras.
 Elliott and  Niu first  studied this tracial approximation structure in \cite{EZ} and they showed that some class of ${\rm C^*}$-algebras  and $K$-theory properties of
 ${\rm C^*}$-algebras in a class $\Omega$ are inherited by simple unital ${\rm C^*}$-algebras in the
 class  of tracial approximated by ${\rm C^*}$-algebras in the class $\Omega$.
 The properties of a class of ${\rm C^*}$-algebra $\Omega$ preserved in simple unital ${\rm C^*}$-algebras in the
 class  of tracial approximated by ${\rm C^*}$-algebras in the class $\Omega$,  can be used to study the crossed   product   ${\rm C^*}$-algebras,  which obtained by finite group  action
  on ${\rm C^*}$-algebra which has  tracial Rokhlin property (\cite{FF1}).

Centrally  large subalgebra was  introduced  \cite{AN} by Phillips and  Archey   as  abstractions of Putnam's orbit breaking subalgebra of the crossed product algebra ${\rm C}^*(X,\mathbb{Z},\sigma)$ of the Cantor set by a minimal homeomorphism in \cite{P}.
Whether the properties of centrally  large subalgebra can be preserved in the original algebra has
important applications in studying the properties of certain crossed  product ${\rm C^*}$-algebras   which obtained by integer group $\mathbb{Z}$  action
  on compact topological spaces $X$ (\cite{EN2}).

Inspired  by   centrally  large subalgebra and tracial approximation structure, Elliott, Fan and Fang introduced a class of  unital weakly   tracial approximation ${\rm C^*}$-algebras in \cite{EFF}.  The notion  generalizes  both Archey and Phillips's centrally large subalgebras and Lin's notion of tracial approximation.


Let $\Omega$ be a class of finite dimension ${\rm C^*}$-algebras. Let $A$ be an infinite-dimensional unital simple $\rm C^{*}$-algebra and let
$B\subseteq A$ be a centrally large
    	subalgebra of $A$ such that $B$ has  tracial topological rank zero. In \cite{FFW1}, Fan, Fang and Wang show that $A$ is in the  class
 of ${\rm C^*}$-algebras which can be weakly  tracially approximated by ${\rm C^*}$-algebras in $\Omega$

In  \cite{EFF},  Elliott,  Fan, and Fang  show that  the following  properties  of unital
${\rm C^*}$-algebras in a class $\Omega$ are inherited by unital simple  ${\rm C^*}$-algebras in the class of ${\rm WTA}\Omega$:
$(1)$ tracially $\mathcal{Z}$-absorbing,
$(2)$ tracial nuclear dimension at most $n$,
$(3)$ the property $\rm SP$, and
$(4)$ $m$-almost divisible.

In this paper, we shall prove the following  results:

  Let $\Omega$ be a class of   unital
${\rm C^*}$-algebras which  have  uniform property $\Gamma$ (see Definition \ref{def:2.9}).  Then  $A$  has  uniform property $\Gamma$ for  any  simple  unital stably finite  ${\rm C^*}$-algebra $A\in{\rm  WTA}\Omega$ (Theorem  \ref{thm:3.1}).
\vskip 0.1cm
Let $\Omega$ be a class of   unital nuclear
${\rm C^*}$-algebras  which have a  type of  tracial nuclear dimension at most $n$ (see Definition \ref{def:2.8}).   Then $A$ has the  type of tracial nuclear dimension at most $n$ for  any  simple  unital   ${\rm C^*}$-algebra $A\in{\rm  WTA}\Omega$ (Theorem  \ref{thm:3.4}).
\vskip 0.1cm
Let $\Omega$ be a class of unital ${\rm C^*}$-algebras which are weakly $(m, n)$-divisible (see Definition \ref{def:2.2}). Let $A\in {\rm  WTA}\Omega$
    	be a simple unital stably finite $C^{*}$-algebra such that for any integer $n\in \mathbb{N}$ the ${\rm C^*}$-algebra ${\rm M}_{n}(A)$
    	belongs to the class ${\rm WTA}\Omega$. Then $A$ is secondly weakly $(m, n)$-divisible (see Definition \ref{def:2.3}) (Theorem  \ref{thm:3.7}).

As applications, the following known results follow from these results.

Let $A$ be an infinite-dimensional stably finite unital simple separable  ${\rm C^*}$-algebra. Let $B\subseteq A$ be a centrally large subalgebra in $A$ such that $B$ has uniform property $\Gamma$. Then   $A$ has uniform property $\Gamma$. This result was obtained by Fan and Zhang in \cite{FZ}

Let $\Omega$ be a class of  stably finite unital
${\rm C^*}$-algebras such that for  any $B\in \Omega$,  $B$ has  uniform property $\Gamma$. Then $A$ has  uniform property $\Gamma$ for  any  simple  unital ${\rm C^*}$-algebra $A\in \rm{TA}\Omega$.
This result was obtained  by Fan and Zhang in \cite{FZ}.

Let $A$ be a unital simple stably finite  separable  ${\rm C^*}$-algebra. Let $B\subseteq A$ be a centrally large subalgebra of  $A$ such that $B$ is weakly $(m, n)$-divisible. Then   $A$ is  secondly weakly $(m, n)$-divisible. This result was obtained  by Fan, Fang, and Zhao in \cite{FFZ}.

\section{Preliminaries and definitions}
Let $A$ be a ${\rm C^*}$-algebra. For two positive elements $a,b\in A$ we say that $a$ is that $a$ is Cuntz subequivalent to $b$  and write  $a\precsim b$
if there is a sequence $(r_n)_{n=1}^\infty$
of elements of $A$ such that $$\lim_{n\to \infty}\|r_nbr_n^*-a\|=0.$$
 We know that Cuntz equivalent is an equivalence relation. We write $a\sim b$ and say $a$ and $b$ are Cuntz equivalent if $a\precsim b$ and $b\precsim a$.

Let $A$ be a ${\rm C^*}$-algebra and let ${\rm M}_{\infty}(A)=\bigcup_{n\in \mathbb{N}}{\rm M}_n(A)$ where ${\rm M}_n(A)$ is included into  ${\rm M}_{n+1}(A)$ by copy ${\rm M}_n$  into the top left corner of ${\rm M}_{n+1}(A)$.
Suppose $a, b\in {\rm M}_{\infty}(A)_+$. Then $a\in {\rm M}_n(A)$ and $b\in {\rm M}_m(A)$ for some $n,m\in \mathbb{N}$.
e say that $a$ is that $a$ is Cuntz subequivalent to $b$  and write  $a\precsim b$ if $a\oplus 0_{\max\{n-m,0\}}\precsim a\oplus 0_{\max\{m-n,0\}}$ as elements in
 ${\rm M}_{\max\{n,m\}}(A)$ where $0_n$ is the zero element of ${\rm M}_n(A)$.



For a positive element $a$ in a ${\rm C^*}$-algebra $A$ and $\varepsilon>0$,
 we denote by $(a-\varepsilon)_+$ the positive element obtained by applying  the functional calculus to the function
  $f(t)={\max (0, t-\varepsilon)}$ where $t\in [0,\|a\|]$.


The following facts are  well known ({\rm \cite{PPT}, \cite{HO}, \cite{P3}, \cite{RW}.}).
\begin{theorem} \label{thm:2.1} Let $A$ be a ${\rm C^*}$-algebra.

 $(1)$ Let $a, b\in A_+$ and any  $\varepsilon>0$  be such that
$\|a-b\|<\varepsilon$.  Then   $(a-\varepsilon)_+\precsim b$.

$(2)$ Let $a, p$ be positive elements in $A$ with $p$ a projection.
 If $p\precsim a,$ and $p$ is  not equivalent to $a$,  then there is a nonzero element $b$ in $A$ such that  $bp=0$ and $b+p\precsim a$.

 $(3)$ Let $a$ be a  positive element  of   ${\rm C^*}$-algebra $A$ and $a$ is
not Cuntz equivalent to a projection.  Then there exists non-zero positive element $d\in A$ such that
 $(a-\delta)_++d\precsim a.$

\end{theorem}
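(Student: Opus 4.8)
\section*{Proof proposal}

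The three assertions call for different techniques, and I would prove them in the order (1), (3), (2), using (1) as the basic tool and (3) inside the argument for (2). For (1) the plan is to reduce everything to the elementary fact that $0\le x\le y$ implies $x\precsim y$, which one checks directly by verifying that $d_n=x^{1/2}(y+\tfrac1n)^{-1/2}$ satisfies $d_n y d_n^*\to x$. Writing $\varepsilon_0=\|a-b\|<\varepsilon$ and passing to the unitization, the inequality $a-b\le\varepsilon_0 1$ gives $x:=a-\varepsilon 1\le a-\varepsilon_0 1\le b$. Decomposing $x=x_+-x_-$ with $x_+=(a-\varepsilon)_+$ and $x_+x_-=0$, one obtains $x_+\le b+x_-$. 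Cutting down by the approximate units $e_n=(a-\varepsilon)_+\big((a-\varepsilon)_++\tfrac1n\big)^{-1}$, which are functions of $(a-\varepsilon)_+$ vanishing at $0$ and hence annihilate $x_-$, yields $e_n(a-\varepsilon)_+e_n\le e_n b e_n\precsim b$. Since in the commutative algebra $C^*((a-\varepsilon)_+)$ the element $e_n(a-\varepsilon)_+e_n$ has the same support as $(a-\varepsilon)_+$, the two are Cuntz equivalent, so $(a-\varepsilon)_+\precsim b$. This avoids any appeal to limits, and hence any circularity with the statement being proved.

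For (3) I would argue purely by functional calculus in $C^*(a)$. Because $a$ is not Cuntz equivalent to a projection, $0$ is not isolated in $\sigma(a)$, so $\sigma(a)\cap(0,\delta)\neq\varnothing$; as $g>0$ on $(0,\delta)$ this forces $g(a)\neq0$. The supports of $g$ and of $t\mapsto(t-\delta)_+$ are disjoint, so $g(a)$ and $(a-\delta)_+$ are orthogonal and $(a-\delta)_++g(a)=h(a)$ for the nonnegative $h\in C_0(0,\|a\|]$ with $h(0)=0$; any such function of $a$ lies in $\overline{aAa}$ and is therefore Cuntz dominated by $a$, giving $(a-\delta)_++g(a)\precsim a$.

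For (2) the plan is to combine a spectral gap with (3). Since $p$ is a projection and $p\precsim a$, a spectral-gap argument (applying (1) to an approximant $vav^*$ with $\|vav^*-p\|<\tfrac12$ and cutting with a function equal to $1$ near $1$) produces $\delta>0$ with $p\precsim(a-\delta)_+$, and moreover $p$ is Murray--von Neumann equivalent to a projection $p_1\in\overline{(a-\delta)_+A(a-\delta)_+}$. If $a$ is not Cuntz equivalent to a projection, then (3) supplies a nonzero $g(a)$ orthogonal to $(a-\delta)_+$, whence $g(a)p_1=0$ and $p_1+g(a)\precsim a$; if instead $a\sim r$ for a projection $r$, then $p\precsim r$ with $p\not\sim r$ exhibits $p_1$ as a proper subprojection of $r$, and $b=r-p_1\neq0$ works. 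In either case one obtains a nonzero $b$ orthogonal to the realization $p_1$ with $p_1+b\precsim a$.

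The step I expect to be the main obstacle is the final bookkeeping: the element $b$ produced above is orthogonal to the particular copy $p_1\subseteq\overline{aAa}$, and naively transporting it back along the partial isometry implementing $p\sim p_1$ annihilates it (the peeled-off piece is orthogonal to the very range onto which the isometry maps). One must therefore arrange the given $p$ to lie inside $\overline{aAa}$ from the outset---which is legitimate since $p\precsim a$---so that the orthogonal complement of $p$ genuinely meets the spectral part of $a$ lying below $\delta$. This repositioning, rather than any hard estimate, is the delicate point, and it is exactly the issue treated in \cite{P3}.
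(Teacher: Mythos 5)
The paper itself offers no proof of Theorem \ref{thm:2.1}; it is quoted as a package of known facts from \cite{PPT}, \cite{HO}, \cite{P3}, \cite{RW}, so your proposal can only be judged on its own merits. Parts (1) and (3) are fine: your argument for (1) is the standard R{\o}rdam-type proof (pass to the unitization, get $(a-\varepsilon)_+\leq b+x_-$ with $x_-=(a-\varepsilon)_-$, cut by an approximate unit for $(a-\varepsilon)_+$ that kills $x_-$), and for (3) the observation that non-equivalence to a projection forces $0$ to be a non-isolated point of $\sigma(a)$, together with the disjointness of the supports of $g$ and $(t-\delta)_+$, does give $g(a)\neq 0$ and $(a-\delta)_++g(a)=h(a)\in\overline{aAa}$, hence $\precsim a$.

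Part (2), however, has a genuine gap, and it is exactly the one you flag at the end --- but your proposed repair does not work. Your construction produces a nonzero element ($g(a)$ in the purely positive case, $r-p_1$ in the projection case) orthogonal to a projection $p_1$ that is merely Murray--von Neumann equivalent to the given $p$ and sits inside $\overline{(a-\delta)_+A(a-\delta)_+}$. The conclusion of the theorem, $bp=0$, refers to the originally given $p$, and it is not invariant under replacing $p$ by an equivalent projection; so ``arranging the given $p$ to lie inside $\overline{aAa}$ from the outset'' is not legitimate --- $p\precsim a$ only yields an equivalent copy of $p$ there, and proving the statement for that copy is proving a different statement. (Even if $p$ did lie in $\overline{aAa}$, that would not force $g(a)p=0$; for that you would need $p\in\overline{(a-\delta)_+A(a-\delta)_+}$, which again cannot be arranged without replacing $p$.) What is missing is the step converting the element $c$ orthogonal to $p_1$ into a nonzero $b$ with $bp=0$ and $\langle b\rangle\leq\langle c\rangle$, after which $\langle b+p\rangle=\langle b\rangle+\langle p\rangle\leq\langle c\rangle+\langle p_1\rangle\leq\langle a\rangle$. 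The natural candidate is $b=(1-p)c(1-p)$, which works whenever it is nonzero (it is Cuntz equivalent to $c^{1/2}(1-p)c^{1/2}\leq c$); but $(1-p)c(1-p)$ can vanish (i.e.\ $c\in pAp$), and ruling out or handling that degenerate case requires an additional argument exploiting that $c$ lives in the spectral part of $a$ below $\delta$ while $p$ is subequivalent to $(a-\delta)_+$. That argument is the actual content of the cited lemmas in \cite{HO} and \cite{P3}, and it is not mere bookkeeping.
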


The property  of weakly $(m, n)$-divisible was introduced by  Robert and  R{\o}rdam in \cite{KM}.
\begin{definition}{\rm (\cite{KM}.)}\label{def:2.2}
    	Let $A$ be a unital ${\rm C^*}$-algebra. Let $m, n\geq 1$ be two integers. $A$ is said to be weakly $(m, n)$-divisible,
    	if for every $a\in {\rm M}_{\infty}(A)_{+}$ and any $\varepsilon>0$, there exist elements
    	$x_{1}, x_{2}, \cdots, x_{n}\in {\rm M}_{\infty}(A)_{+}$, such that
    	$m\left\langle x_{j}\right\rangle \leq \left\langle a \right\rangle$, for all $j=1, 2, \cdots, n$ and
    	$\left\langle (a-\varepsilon)_{+} \right\rangle\leq\left\langle x_{1} \right\rangle+\left\langle x_{2} \right\rangle+\cdots+\left\langle x_{n} \right\rangle$.
    \end{definition}
    \begin{definition}{\rm (\cite{FFZ}.)}\label{def:2.3}
    	Let $A$ be a unital ${\rm C^*}$-algebra. Let $m, n\geq 1$ be two integers. $A$ is said to be secondly weakly $(m, n)$-divisible,
    	if for every $a\in {\rm  M}_{\infty}(A)_{+}$ and any $\varepsilon>0$, there exist elements
    	$x_{1},  x_{2}, \cdots,  x_{n}\in {\rm M}_{\infty}(A)_{+}$ such that
    	$m\left\langle x_{j}\right\rangle \leq \left\langle a \right\rangle +\left\langle a \right\rangle$
    	for all $j=1, 2, \cdots, n$, and $\left\langle (a-\varepsilon)_{+} \right\rangle\leq\left\langle x_{1} \right\rangle+\left\langle x_{2} \right\rangle+\cdots+\left\langle x_{n} \right\rangle$.
    \end{definition}

Let  $\Omega$ be a class of unital ${\rm C^*}$-algebras. Elliott, Fan,  and Fang defined as follows  the class
 of ${\rm C^*}$-algebras which can be weakly  tracially approximated by ${\rm C^*}$-algebras in $\Omega$, and denoted this class by ${\rm WTA}\Omega$ in \cite{EFF}.

\begin{definition}\label{def:2.6}{\rm (\cite{EFF}.)}  A  simple unital ${\rm C^*}$-algebra $A$ is   said to belong to the class ${\rm WTA}\Omega$  if, for any
 $\varepsilon>0$, any finite
subset $F\subseteq A$, and any  non-zero element $a\geq 0$, there
exist a  projection $p\in A$, an  element $g\in A$ with $0\leq g\leq 1$,
  and a unital ${\rm C^*}$-subalgebra $B$ of $A$ with
$g\in B, ~1_B=p$, and $B\in \Omega$, such that

$(1)$  $(p-g)x\in_{\varepsilon} B, ~ x(p-g)\in_{\varepsilon} B$ for all $x\in  F$,

$(2)$ $\|(p-g)x-x(p-g)\|<\varepsilon$ for all $x\in F$,

$(3)$ $1-(p-g)\precsim a$, and

$(4)$ $\|(p-g)a(p-g)\|\geq \|a\|-\varepsilon$.
\end{definition}



Winter and Zacharias  introduced the notion of  nuclear dimension for  ${\rm C^*}$-algebras in \cite{WW3}.
\begin{definition}{\rm (\cite{WW3}.)}\label{def:2.7} A ${\rm C^*}$-algebra $A$ has nuclear dimension at most $n$, and denoted by ${\rm dim_{nuc}}\leq n$, if there exists a
net $(F_\lambda,\psi_\lambda, \varphi_\lambda)_{\lambda\in \Lambda}$ such that the $F_\lambda$ are finite-dimensional ${\rm C^*}$-algebras, and such that $\psi_\lambda:A\to F_\lambda$ and $\varphi_\lambda:F_\lambda\to A$ are completely positive maps satisfying

$(1)$ $\psi_\lambda\varphi_\lambda(a)\to a$ uniformly on finite subsets of $A$,

$(2)$ $\|\psi_\lambda\|\leq 1$,

$(3)$ for each $\lambda$,
$F_\lambda$ decomposes into $n+1$ ideals
$F_\lambda={F_\lambda}^0+\cdots+{F_\lambda}^n$
such that
$\varphi_\lambda|_{{F_\lambda}^i}$ is
completely positive contractive order zero maps (which means preserves orthogonality, i.e., $\varphi_\lambda(e)\varphi_\lambda(f)=0$ for all $e,~ f\in {{F_\lambda}^i}$ with $ef=0$) for $i=0,~\cdots,~n$.
\end{definition}

Inspired by  Hirshberg and  Orovitz's  tracial $\mathcal{Z}$-absorption in \cite{HO},
 Fu introduced  some notion of  tracial nuclear dimension in his doctoral dissertation  \cite{FU} (see also  \cite{FL}).

 \begin{definition}{\rm (\rm \cite{FU}.)}\label{def:2.8}
		Let $A$ be a  $\rm C^{*}$-algebra. Let $n\in \mathbb{N}$ be an integer.
 $A$ is  said to have  second  type of  tracial nuclear dimension
		at most $n$,  and denoted by ${\rm {T^2dim_{nuc}}}(A)\leq n$, if for any finite positive subset $\mathcal{F}\subseteq A$, for any $\varepsilon>0$ and for any nonzero positive element $a\in A$, there exist a finite dimensional $\rm C^{*}$-algebra $F=F_{0}\oplus\cdots\oplus F_{n}$ and  completely positive  maps $\psi:A\rightarrow F$, $\varphi:F\rightarrow A$ such that
		
		$(1)$ for any  $x\in F$,  there exists $x'\in A_{+}$, such that $x'\precsim a$ and $\|x-x'-\varphi\psi(x)\|<\varepsilon$,
		
		$(2)$  $\|\psi\|\leq1$, and
		
		$(3)$ $\varphi|_{F_{i}}$ is  a contractive completely positive  order zero map for $i=1, \cdots, n$.
		
	\end{definition}

 Inspired by Fu's second type of tracial nuclear dimension  in  \cite{FU}, we shall introduce a new  type  of  tracial nuclear dimension for unital $\rm C^{*}$-algebra.

	\begin{definition}\label{def:2.9}
		Let $A$ be a unital $\rm C^{*}$-algebra. Let $n\in \mathbb{N}$ be an integer.
 $A$ will said to have a new  type  of  tracial nuclear dimension
		at most $n$, if for any finite positive subset $\mathcal{F}\subseteq A$, for any $\varepsilon>0$ and for any nonzero positive element $a\in A_{+}$, there exist a finite dimensional $\rm C^{*}$-algebra $F=F_{0}\oplus\cdots\oplus F_{n}$ and  completely positive  maps $\psi:A\rightarrow F$, $\varphi:F\rightarrow A$ such that
		
		$(1)$ for any  $x\in F$,  there exists $x'\in A_{+}$, such that $\|x-x'-\varphi\psi(x)\|<\varepsilon$,
		
		$(2)$ $(1_{A}-\varphi\psi(1_{A})-\varepsilon)_+\precsim a$,
		
		$(3)$ $\|\psi\|\leq1$, and
		
		$(4)$ $\varphi|_{F_{i}}$ is  a contractive completely positive  order zero map for $i=1, \cdots, n$.
		
	\end{definition}

Let $A$ be a unital $\rm C^{*}$-algebra. It is easy to know that
 ${\rm {T^2dim_{nuc}}}(A)\leq n$ implies   $A$ has the  new  type of  tracial nuclear dimension at most $n$.

 Uniform property $\Gamma$ was introduced by  J. Castillejos, S. Evington, A. Tikuisis, S. White,  and W. Winter, which was used to prove that $\mathcal{Z}$-stable imply that  finite nuclear dimension in  \cite{CETWW}.




We recall the equivalent local refinement of Property $\Gamma$ from  Proposition 2.4 of \cite{CETWW1}.

\begin{theorem} \label{thm:2.11} {\rm (\cite{CETWW1}.)} Let $A$ be a separable ${\rm C^*}$-algebra
with $T(A)$ (the trace state space of $A$) nonempty and compact. Then the following are equivalent:

 $(1)$ $A$ has uniform property $\Gamma$.

 $(2)$ For any finite subset $F\subseteq A$, any $\varepsilon>0$, and any integer $n\in \mathbb{N}$, there exist pairwise orthogonal positive contractions $e_1, \cdots, e_n\in A$ such that for $i=1, \cdots, n$, and $a\in F$, we have
 $\|e_ia-ae_i\|<\varepsilon$ and $$\sup_{\tau\in T(A)}\|\tau(ae_i)-\frac{1}{n}\tau(a)\|<\varepsilon.$$
\end{theorem}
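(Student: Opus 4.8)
The plan is to prove the two implications by a reindexing argument moving between the global formulation of uniform property $\Gamma$, which lives in a tracial ultrapower, and its componentwise approximate shadow (2) inside $A$. First I would fix a free ultrafilter $\omega$ on $\mathbb{N}$ and write $\|a\|_{2,T(A)}=\sup_{\tau\in T(A)}\tau(a^*a)^{1/2}$ for the uniform trace seminorm. Form the norm ultrapower $A_{\omega}=\ell^{\infty}(A)/c_{\omega}$, where $c_{\omega}=\{(a_m):\lim_{m\to\omega}\|a_m\|=0\}$, and the uniform tracial ultrapower $A^{\omega}=\ell^{\infty}(A)/c_{T,\omega}$, where $c_{T,\omega}=\{(a_m):\lim_{m\to\omega}\|a_m\|_{2,T(A)}=0\}$, together with its set $T_{\omega}(A)$ of limit traces. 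Since $c_{\omega}\subseteq c_{T,\omega}$ there is a canonical surjection $\pi\colon A_{\omega}\to A^{\omega}$. In this language, uniform property $\Gamma$ asserts (in one standard formulation) that for every $n$ there are pairwise orthogonal projections $p_1,\dots,p_n\in A^{\omega}\cap A'$ with $\tau(ap_i)=\frac{1}{n}\tau(a)$ for all $a\in A$ and $\tau\in T_{\omega}(A)$, and the theorem is the assertion that this is equivalent to the finite, operator-norm approximate condition (2).

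For $(2)\Rightarrow(1)$ I would run a diagonal argument: exhaust $A$ by finite sets $F_1\subseteq F_2\subseteq\cdots$ with dense union, apply (2) to $F_m$ with tolerance $1/m$ and the fixed $n$, and assemble the resulting contractions into sequences defining pairwise orthogonal positive contractions $\bar e_1,\dots,\bar e_n\in A^{\omega}\cap A'$ (tracial commutation is automatic since $\|\cdot\|_{2,T(A)}\le\|\cdot\|$) with $\tau(a\bar e_i)=\frac{1}{n}\tau(a)$ for all $a\in A$ and $\tau\in T_{\omega}(A)$. Putting $a=1$ and summing gives $\tau(\sum_i\bar e_i)=\tau(1)$ for every limit trace, so $1-\sum_i\bar e_i$ is a positive element of zero uniform $2$-norm, whence $\sum_i\bar e_i=1$. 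It then remains to replace these orthogonal positive contractions by projections carrying the same trace data; here I would use that the uniform tracial ultrapower is tracially complete, so that $A^{\omega}\cap A'$ carries enough finite von Neumann structure for the relevant spectral projections of the $\bar e_i$ to exist in it, from which one extracts orthogonal projections $p_1,\dots,p_n$ still satisfying $\tau(ap_i)=\frac{1}{n}\tau(a)$.

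For $(1)\Rightarrow(2)$ the difficulty is that the $p_i$ commute with $A$ only in the $2$-norm, whereas (2) demands operator-norm commutation. The bridge is the commutant-lifting property of $\pi$: using that $\ker\pi$ is a $\sigma$-ideal in $A_{\omega}$ in Kirchberg's sense, one shows $\pi(A_{\omega}\cap A')=A^{\omega}\cap A'$, so the family $p_1,\dots,p_n$ lifts to pairwise orthogonal positive contractions $\tilde e_1,\dots,\tilde e_n\in A_{\omega}\cap A'$ (lifting them one at a time into the orthogonal complements of the previous lifts). Each $\tilde e_i$ is represented by a sequence $(e_i^{(m)})_m$ of positive contractions in $A$ that are exactly orthogonal and commute with every $a\in A$ in operator norm along $\omega$, while the equidistribution $\tau(ap_i)=\frac{1}{n}\tau(a)$ for all limit traces unwinds precisely to $\sup_{\tau\in T(A)}|\tau(ae_i^{(m)})-\frac{1}{n}\tau(a)|\to_{\omega}0$, since the supremum over $T(A)$ is exactly what the limit traces detect. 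Given $F,\varepsilon,n$, choosing an index $m$ in the associated $\omega$-large set yields the $e_i$ required in (2).

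The main obstacle is the commutant lifting in $(1)\Rightarrow(2)$, namely upgrading $2$-norm approximate centrality to genuine operator-norm approximate centrality via the $\sigma$-ideal structure of $\ker\pi$; this is the one step that is not a formal reindexing and that genuinely uses separability of $A$ and compactness of $T(A)$. The conversion of positive contractions to projections in $(2)\Rightarrow(1)$ is the secondary difficulty, but it is handled by the tracially complete (von Neumann–algebraic) structure of $A^{\omega}\cap A'$ rather than by anything specific to $A$ itself.
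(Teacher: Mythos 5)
This theorem is not proved in the paper at all: it is quoted verbatim as the ``local refinement'' of uniform property $\Gamma$, with the proof deferred to Proposition 2.4 of \cite{CETWW1}. So there is no in-paper argument to compare against; what you have written is a blind reconstruction of the argument in that reference, and it is essentially the correct one. Your two implications follow the standard route: for $(2)\Rightarrow(1)$ a diagonal/reindexing argument producing pairwise orthogonal positive contractions in $A^{\omega}\cap A'$ with exact tracial equidistribution, and for $(1)\Rightarrow(2)$ the commutant-lifting theorem $\pi(A_\omega\cap A')=A^{\omega}\cap A'$, which rests on the trace-kernel ideal being a $\sigma$-ideal and is indeed the one genuinely non-formal step (and the one place separability of $A$ and compactness of $T(A)$ are used). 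One remark: the ``secondary difficulty'' you flag in $(2)\Rightarrow(1)$ -- upgrading the orthogonal positive contractions $\bar e_1,\dots,\bar e_n$ to projections via tracial completeness and spectral projections -- is not needed. You have already shown $\sum_i\bar e_i=1$ in $A^{\omega}$; combined with pairwise orthogonality this gives $\bar e_j=\bar e_j\sum_i\bar e_i=\bar e_j^{\,2}$, so the $\bar e_i$ are automatically projections. The only other point deserving a sentence in a full write-up is that in $(1)\Rightarrow(2)$ the representing sequences of the lifted $\tilde e_i$ are a priori only approximately orthogonal along $\omega$ and must be corrected to exactly orthogonal positive contractions (a standard functional-calculus or projectivity argument); with that, choosing an index in the appropriate $\omega$-large set yields $(2)$ exactly as you describe.
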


\section{The main results}

\begin{theorem}\label{thm:3.1}
     	Let $\Omega$ be a class of unital separable  $\rm C^{*}$-algebra such that $T(B)$ (the trace state space of $B$) is nonempty and compact and $B$ has  uniform property $\Gamma$ for any $B\in \Omega$.
     	Then $A$ has uniform property $\Gamma$ for any simple infinite-dimensional separable unital stably finite $\rm C^{*}$-algebra $A\in {\rm WTA}\Omega$.
     \end{theorem}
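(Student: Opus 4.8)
The plan is to verify condition (2) of Theorem \ref{thm:2.11}, which characterizes uniform property $\Gamma$. First observe that $T(A)$ is nonempty since $A$ is unital and stably finite, and weak-$*$ compact since $A$ is unital, while $A$ is separable by hypothesis; thus Theorem \ref{thm:2.11} is applicable. So I fix a finite set $F\subseteq A$ of contractions, $\varepsilon>0$, and $n\in\mathbb{N}$, and must produce pairwise orthogonal positive contractions $e_1,\dots,e_n\in A$ with $\|e_ix-xe_i\|<\varepsilon$ and $\sup_{\tau\in T(A)}|\tau(xe_i)-\tfrac1n\tau(x)|<\varepsilon$ for all $x\in F$. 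Choosing small parameters $\delta,\varepsilon'>0$ to be fixed at the end, I use simplicity and infinite-dimensionality of $A$ to select a nonzero positive $a\in A$ whose dimension functions $d_\tau(a)=\lim_m\tau(a^{1/m})$ satisfy $\sup_{\tau}d_\tau(a)<\delta$. Applying Definition \ref{def:2.6} to $F,\varepsilon',a$ yields a projection $p$, an element $g\in B$ with $0\le g\le1$, and a unital subalgebra $B\in\Omega$ with $1_B=p$, satisfying (1)--(4). Set $c=p-g$, a positive contraction in $B$ with $0\le c\le p$. Condition (3) gives $1-c\precsim a$, hence $\tau(1-c)\le d_\tau(1-c)\le d_\tau(a)<\delta$, so $\tau(p)\ge\tau(c)>1-\delta$ for every $\tau$; in particular $\tau|_B\neq0$. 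Finally choose $b_x\in B$ with $\|cx-b_x\|<\varepsilon'$ using condition (1).

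Since $B\in\Omega$ has uniform property $\Gamma$ with $T(B)$ nonempty and compact, I apply Theorem \ref{thm:2.11}(2) inside $B$ to the finite set $\{c,g\}\cup\{b_x:x\in F\}$, with tolerance $\varepsilon'$ and the same $n$, obtaining pairwise orthogonal positive contractions $f_1,\dots,f_n\in B$ with $\|f_ib-bf_i\|<\varepsilon'$ and $\sup_{s\in T(B)}|s(bf_i)-\tfrac1n s(b)|<\varepsilon'$ for all $b$ in that set. I then set $e_i=c^{1/2}f_ic^{1/2}$, which are positive contractions with $0\le e_i\le c$. Because $e_ie_j=c^{1/2}f_icf_jc^{1/2}$ and $f_icf_j$ lies within $\varepsilon'$ of $cf_if_j=0$, the $e_i$ are pairwise within $\varepsilon'$ of orthogonal; a standard perturbation replaces them by exactly orthogonal positive contractions with a small norm change, which I absorb into the final error and which alters the commutator and trace estimates below only negligibly. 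I retain the name $e_i$.

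The heart of the matter is the commutator estimate, and it explains the choice of $e_i$. The naive choice $e_i=f_i$ fails: writing $f_ix=f_i(cx)+f_i(gx)$, the term $f_i(gx)$ is uncontrolled, since $f_i\in B$ only almost commutes with elements of $B$ while $x\notin B$ and $g$ need not be norm-small. Compression by $c^{1/2}$ repairs this, because condition (2) together with functional calculus gives $\|c^{1/2}x-xc^{1/2}\|<\eta(\varepsilon')$ with $\eta(\varepsilon')\to0$, whence
\[
e_ix-xe_i=c^{1/2}f_ic^{1/2}x-xc^{1/2}f_ic^{1/2}\approx c^{1/2}(f_ix-xf_i)c^{1/2},
\]
and then $c^{1/2}xc^{1/2}\approx cx\approx b_x\in B$ converts the right-hand side into $f_ib_x-b_xf_i$ up to errors governed by $\varepsilon'$ and $\eta(\varepsilon')$; the latter difference is $<\varepsilon'$ by property $\Gamma$ for $B$. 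Thus $\|e_ix-xe_i\|$ is as small as desired, the point being that compression annihilates the troublesome $g$- and $(1-p)$-parts while condition (2) keeps $c^{1/2}$ almost central and condition (1) keeps $c^{1/2}xc^{1/2}$ almost inside $B$.

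For the trace condition, fix $\tau\in T(A)$ and use $\tau(p)>1-\delta>0$ to form the tracial state $s=\tau|_B/\tau(p)\in T(B)$. By the trace property, $\tau(xe_i)=\tau(c^{1/2}xc^{1/2}f_i)\approx\tau(b_xf_i)$; property $\Gamma$ for $B$ gives $|\tau(b_xf_i)-\tfrac1n\tau(b_x)|\le\varepsilon'\tau(p)\le\varepsilon'$; and $\tfrac1n\tau(b_x)\approx\tfrac1n\tau(cx)\approx\tfrac1n\tau(x)$, the final step using $|\tau(x)-\tau(cx)|=|\tau((1-c)x)|\le 2\|x\|\,\tau(1-c)<2\delta$. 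Assembling all estimates and choosing $\delta$ and $\varepsilon'$ small enough relative to $\varepsilon$, $n$, and $\max_{x\in F}\|x\|$, both requirements of Theorem \ref{thm:2.11}(2) are met, so $A$ has uniform property $\Gamma$. I expect the commutator estimate to be the main obstacle, namely recognizing that the averaging system must be the compressions $c^{1/2}f_ic^{1/2}$ rather than the $f_i$ themselves; the secondary technical points are arranging $a$ with uniformly small $d_\tau(a)$ (to keep $\tau(p)$ bounded away from $0$, which legitimizes the normalization $\tau|_B/\tau(p)\in T(B)$ and the passage from $\tau(cx)$ to $\tau(x)$) and the perturbation to exact orthogonality.
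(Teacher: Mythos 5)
Your proposal is correct and follows essentially the same route as the paper's own proof: choose $a$ with $d_\tau(a)<\delta$ uniformly over $T(A)$, apply the ${\rm WTA}\Omega$ approximation to obtain $p$, $g$, $B$, invoke the local form of uniform property $\Gamma$ inside $B$ on the pushed-forward finite set, compress the resulting contractions by $(p-g)^{1/2}$, perturb to exact orthogonality, and normalize $\tau|_B$ by $\tau(p)>1-\delta$ for the trace estimate. The only differences are cosmetic (you feed $\{c,g\}\cup\{b_x\}$ rather than $\{a_j',(p-g),(p-g)^{1/2}\}$ into $B$'s property $\Gamma$, and you make the normalization $\tau|_B/\tau(p)\in T(B)$ explicit where the paper leaves it implicit).
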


\begin{proof} Since $A$ is a stably finite $\rm C^{*}$-algebra, then  $T(A)$ is nonempty.  This together with the unitality of $A$ implies that $T(A)$ is compact.
  By Theorem \ref{thm:2.11}: (2),  we need to show that for fixed  finite subset $F=\{a_1, a_2, \cdots, a_k\}$ of $A$ (we may assume that $\|a_j\|\leq 1$ for all $j=1, \cdots,  k$),   any $\varepsilon>0$, and  any integer $n\in \mathbb{N}$, there exist pairwise orthogonal positive contractions $e_1, \cdots,  e_n\in A$ such that
 $\|e_ia_j-a_je_i\|<\varepsilon$ and $$\sup_{\tau\in T(A)}\|\tau(a_je_i)-\frac{1}{n}\tau(a_j)\|<\varepsilon,$$
 for $i=1, \cdots,  n,$ and  $j=1, \cdots, k$.

 For  $\varepsilon>0$, we choose $0<\delta<\varepsilon$  with $X=[0,1],  f(t)=t^{1/2},  g(t)=(1-t)^{1/2}$
  according to  Lemma 2.5.11 in \cite {L2}.

 Since $A$ is an infinite-dimensional  unital simple separable ${\rm C^*}$-algebra, by Corollary 2.5 in \cite{P3},
 there exists a nonzero positive element $a\in A$  such that $\delta>d_\tau(a)
 =\lim_{n\to\infty}\tau(a^{1/n})$ for any $\tau\in T(A)$.

 For $F=\{a_1, a_2, \cdots, a_k\}$, any $\delta>0$, and nonzero $a\in A_+$,
     	 since $A\in {\rm  WTA}\Omega$, there exist a projection $p\in A$, an element
     	 $g\in A$ with $\|g\|\leq1$ and a unital $\rm C^{*}$-subalgebra $B$ of $A$
     	with $g\in B, 1_{B}=p$ and $B\in\Omega$ such that
     	
     	$(1)$ $(p-g)x\in_{\delta}B, ~x(p-g)\in_{\delta}B$ for any $x\in F$,
     	
     	$(2)$ $\|(p-g)x-x(p-g)\|<\delta$ for any $x\in F$, and
     	
     	$(3)$ $1_{A}-(p-g)\precsim a$.

By $(2)$, we have

$(1)'$ $\|(1_{A}-(p-g))a_j-a_j(1_{A}-(p-g))\|<\delta$  for any $j=1, \cdots, k$.

By  $(3)$ and by Proposition 1.19 of \cite{P3}, we have $$d_\tau(1_{A}-(p-g))\leq d_\tau(a)$$ for any $\tau\in T(A)$.  Since
$d_\tau(a)<\delta$, and $\tau(1_{A}-(p-g))\leq d_\tau(1_{A}-(p-g))$, we have
  $$\tau(1_{A}-(p-g))<\delta$$ for any $\tau\in T(A)$.

 By the choice of  $\delta$, by  $(2)$, $(1)'$ and  by Lemma 2.5.11 in \cite {L2}, one has

$\|(1_{A}-(p-g))^{1/2}a_j-a_j(1_{A}-(p-g))^{1/2}\|<\varepsilon,$ and

$\|(p-g)^{1/2}a_j-a_j(p-g)^{1/2}\|<\varepsilon.$

By $(1)$, there exists   $a_j'\in B$  such that

$(2)'$ $\|(p-g)a_j-a_j'\|<\delta.$

By $(1)'$ and $(2)'$, one has

  $\|a_j-a_j'-(1_{A}-(p-g))^{1/2}a_j(1_{A}-(p-g))^{1/2}\|$

  $ =\|(1_{A}-(p-g))a_j+(p-g)a_j-a_j'-
  (1_{A}-(p-g))^{1/2}a_j(1_{A}-(p-g))^{1/2}\|$

  $\leq\|(1_{A}-(p-g))a_j-(1_{A}-(p-g))^{1/2}a_j(1_{A}-(p-g))^{1/2}\|+
  \|(p-g)a_j-a_j'\|$$<\varepsilon+\delta<2\varepsilon,$
   and

   $\|(p-g)^{1/2}a_j(p-g)^{1/2}-a_j'\|\leq\|(p-g)^{1/2}a_j(p-g)^{1/2}-
   (p-g)a_j\|$$+\|(p-g)a_j-a_j'\|$
   $<\varepsilon+\delta<2\varepsilon$
     for all $1\leq j\leq k$.

 For $\varepsilon>0$, and any integer $n\in \mathbb{N}$, we choose $\delta''=\delta''(\varepsilon,n)$ (with $\delta''<\varepsilon$) sufficiently small  such that satisfying  Lemma 2.5.12 in \cite {L2}.

 For $\delta''/2>0$, finite subset $\{a_1', a_2', \cdots,  a_k',  (p-g), (p-g)^{1/2}\}$ of $B$ and $n\in \mathbb{N}$, since $B$ has uniform property $\Gamma$, there exist pairwise orthogonal
 positive contractions $e_1', \cdots, e_n'\in B$ such that for $i=1, \cdots, n,  j=1, \cdots, k$, one has
 $$\|e_i'a_j'-a_j'e_i'\|<\delta''/2,  \|e_i'(p-g)-(p-g)e_i'\|<\delta''/2,$$$$\|e_i'(p-g)^{1/2}-(p-g)^{1/2}e_i'\|<\delta''/2,$$ and $$\sup_{\tau\in T(B)}\|\tau(a_j'e_i')-\frac{1}{n}\tau(a_j')\|<\delta''/2.$$

Since $\|(p-g)^{1/2}e_i'-e_i'(p-g)^{1/2}\|<\delta''$, and $e_i'e_j'=0, i\neq j$,   so we have

$\|(p-g)^{1/2}e_i'(p-g)^{1/2}(p-g)^{1/2}e_j'(p-g)^{1/2}\|$

$\leq\|(p-g)^{1/2}e_i'(p-g)^{1/2}(p-g)^{1/2}e_j'(p-g)^{1/2}-
(p-g)e_i'(p-g)^{1/2}e_j'(p-g)^{1/2}\| $
$+\|(p-g)e_i'(p-g)^{1/2}e_j'(p-g)^{1/2}-
(p-g)e_i'e_j'(p-g)\|$

$<\delta''/2+\delta''/2=\delta''$.

By the choice of $\delta$, since $(p-g)^{1/2}e_i'(p-g)^{1/2}$ is a contraction, by the proof the  Lemma 2.5.12 in \cite{L2}, one can fine pairwise orthogonal
 positive contractions  $e_i$ $(i=1,\cdots, n)$,  such that $$\|(p-g)^{1/2}e_i'(p-g)^{1/2}-e_i\|<\varepsilon.$$

We have

$\|a_je_i-a_j'e_i'\|$$\leq\| a_je_i-a_j(p-g)^{1/2}e_i'(p-g)
^{1/2}\|$

$+\|a_j(p-g)^{1/2}e_i'(p-g)^{1/2}-a_j(p-g)^{1/2}(p-g)^{1/2}e_i'\|$

$+\|a_j(p-g)^{1/2}(p-g)^{1/2}e_i'-(p-g)^{1/2}a_j(p-g)^{1/2}e_i'\|$

$+\|(p-g)^{1/2}a_j(p-g)^{1/2}e_i'-a_j'e_i'\|\leq 4\varepsilon$.

With the same argument, one has
$$ \|e_ia_j-e_i'a_j'\|<4\varepsilon$$ for $i=1, \cdots, n, j=1, \cdots, k$.

Since $\|e_i'a_j'-a_j'e_i'\|<\delta''/2$, so one has

$\|a_je_i-e_ia_j\| $

$\leq\|a_je_i-a_j'e_i'\|+\|a_j'e_i'-e_i'a_j'\|+\|e_i'a_j'-e_ia_j\|
$

$<4\varepsilon+4\varepsilon+\delta''/2<9\varepsilon,$ for $i=1,\cdots, n, j=1, \cdots, k$.

Since $\|a_je_i-a_j'e_i'\|<4\varepsilon$ for any $\tau\in T(A)$, one has
$$|\tau(a_je_i)-\tau(a_j'e_i') |<4\varepsilon.$$

Since $\|a_i-a_i'-(1_{A}-(p-g))^{1/2}a_j(1_{A}-(p-g))^{1/2}\|<2\varepsilon,$ we have
$$|\tau(a_j)-\tau(a_j')-\tau((1_{A}-(p-g))^{1/2}a_j(1_{A}-(p-g))^{1/2})|<2\varepsilon.$$

Therefore, we have

$|\tau(a_je_i)-\frac{1}{n}\tau(a_j)|$

$=|\tau(a_je_i)-\tau(a_j'e_i')|+|\tau(a_j'e_i')-\frac{1}{n}\tau(a_j')|
+|(\frac{1}{n}\tau(a_j')-\frac{1}{n}\tau(a_j)|$

$\leq4\varepsilon+|\tau(a_j'e_i')-\frac{1}{n}\tau(a_j')|+\frac{1}{n}
(2\varepsilon+\tau((1-(p-g))^{1/2}a_j(1-(p-g))^{1/2}))
$

$\leq4\varepsilon+|\tau(a_j'e_i')-\frac{1}{n}\tau(a_j')|+\frac{1}{n}
(2\varepsilon+\tau(1-(p-g))$

$\leq5\varepsilon+|\tau(a_j'e_i')-\frac{1}{n}\tau(a_j')|$.

Therefore, one has

$\sup_{\tau\in T(A)}|\tau(a_je_i)-\frac{1}{n}\tau(a_j)|$

$\leq \sup_{\tau\in T(A)}|\tau(a_j'e_i')-\frac{1}{n}\tau(a_j')|+5\varepsilon$

$\leq \frac{1}{\tau(p)}\sup_{\tau\in T(B)}|\tau(a_j'e_i')-\frac{1}{n}\tau(a_j')|+5\varepsilon$

$\leq \frac{1}{1-\delta}\sup_{\tau\in T(B)}|\tau(a_j'e_i')-\frac{1}{n}\tau(a_j')|+5\varepsilon$

$<5\varepsilon +\frac{\delta''}{1-\delta}$
$<6\varepsilon.$

By Theorem 2.11:(2), $A$ has uniform property $\Gamma$.
\end{proof}

The following two corollaries were obtained by Fan and
Zhang in \cite{FZ}.

\begin{corollary}(\cite{FZ}) \label{cor:3.2}Let $A$ be an infinite-dimensional stably finite unital simple  ${\rm C^*}$-algebra. Let $B\subseteq A$ be a centrally large subalgebra of $A$ such that $B$ has uniform property $\Gamma$. Then   $A$ has uniform property $\Gamma$.\end{corollary}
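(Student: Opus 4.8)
The plan is to deduce this corollary directly from Theorem \ref{thm:3.1} by specializing the class $\Omega$ to the single-algebra class $\{B\}$. Once that choice is made, there are exactly two things to verify: that $A$ belongs to ${\rm WTA}\Omega$, and that $B$ satisfies the standing hypotheses imposed on the members of $\Omega$ in Theorem \ref{thm:3.1}. Neither of these requires a new approximation argument, so the work is essentially bookkeeping.

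For the membership $A \in {\rm WTA}\Omega$, I would invoke the remark recorded immediately after Definition \ref{def:2.6}: if $\Omega = \{B\}$ and $B \subseteq A$ is a centrally large subalgebra of $A$, then $A \in {\rm WTA}\Omega$. Since $B$ is assumed centrally large in $A$, this gives $A \in {\rm WTA}\{B\}$ at once. For the hypotheses on $B$, I would check that $B$ is a unital separable stably finite $\rm C^{*}$-algebra with $T(B)$ nonempty and compact. A centrally large subalgebra contains the unit of $A$, so $B$ is unital with $1_B = 1_A$; as a $\rm C^{*}$-subalgebra of the separable algebra $A$ it is separable; and as a unital subalgebra of the stably finite $A$ it is itself stably finite. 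Being unital and stably finite, $B$ has nonempty trace space $T(B)$, and unitality forces $T(B)$ to be weak-$*$ compact. The remaining requirement, that $B$ have uniform property $\Gamma$, is assumed outright. Since $A$ is simple, infinite-dimensional, separable, unital and stably finite, all hypotheses of Theorem \ref{thm:3.1} are met, and applying it yields that $A$ has uniform property $\Gamma$.

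I do not anticipate a substantive obstacle here, since all the real content sits inside Theorem \ref{thm:3.1}; the corollary amounts to confirming that the single algebra $B$ fits its framework. The one point that warrants a little care is the assertion that $T(B)$ is nonempty and compact: this rests on $B$ being unital (for compactness) and stably finite (for the existence of a tracial state), exactly paralleling the way the corresponding statement for $A$ is dispatched at the very start of the proof of Theorem \ref{thm:3.1}.
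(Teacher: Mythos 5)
Your proposal is correct and matches the derivation the paper intends: specialize Theorem \ref{thm:3.1} to $\Omega=\{B\}$, use the remark after Definition \ref{def:2.6} that a centrally large subalgebra gives $A\in{\rm WTA}\{B\}$, and check that $B$ inherits unitality, separability, and stable finiteness so that $T(B)$ is nonempty and compact. The paper offers no further argument for this corollary, and your treatment of the trace-space point mirrors the gloss at the start of the proof of Theorem \ref{thm:3.1} exactly.
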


\begin{corollary}(\cite{FZ}) \label{cor:3.3} Let $\Omega$ be a class of  stably finite unital
${\rm C^*}$-algebras such that for  any $B\in \Omega$,  $B$ has  uniform property $\Gamma$. Then $A$ has  uniform property $\Gamma$ for  any  simple  unital ${\rm C^*}$-algebra $A\in \rm{TA}\Omega$.\end{corollary}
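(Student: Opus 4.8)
The plan is to obtain Corollary \ref{cor:3.3} as a direct consequence of Theorem \ref{thm:3.1}, the only real work being to pass from tracial to weak tracial approximation and then to check that the standing hypotheses of Theorem \ref{thm:3.1} hold automatically. The decisive input is the remark recorded just after Definition \ref{def:2.6}: for a simple unital ${\rm C^*}$-algebra, $A\in{\rm TA}\Omega$ implies $A\in{\rm WTA}\Omega$ (via the proof of Theorem 4.1 of \cite{EZ}). So the first step is simply to invoke this implication, after which the hypothesis $A\in{\rm TA}\Omega$ is replaced by $A\in{\rm WTA}\Omega$.

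Next I would verify the requirements that Theorem \ref{thm:3.1} imposes on $A$. Separability is immediate, since by definition ${\rm TA}\Omega$ consists of separable ${\rm C^*}$-algebras. For stable finiteness I would argue that each $B\in\Omega$, being unital and stably finite, admits a tracial state, and that the tracial approximation lets one build a tracial state on $A$ as a limit point of states assembled from the traces on the approximating subalgebras; simplicity of $A$ forces this trace to be faithful, whence $A$ is stably finite and, by unitality, $T(A)$ is nonempty and compact, which is exactly the situation with which the proof of Theorem \ref{thm:3.1} opens. Finally, $A$ must be infinite-dimensional; this may be taken as part of the standing convention for ${\rm TA}\Omega$, or seen directly, since a finite-dimensional simple unital algebra is a matrix algebra $M_k$, which for $n\ge 2$ fails uniform property $\Gamma$ (no pairwise orthogonal positive contractions can be simultaneously approximately central and carry the correct tracial weights) and hence cannot occur here.

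It then remains to supply Theorem \ref{thm:3.1} with an approximating class meeting its hypotheses. Although the class $\Omega$ in Corollary \ref{cor:3.3} is not assumed separable, the subalgebras $B$ that actually arise in the weak tracial approximation of $A$ are ${\rm C^*}$-subalgebras of the separable algebra $A$, and are therefore themselves separable. Replacing $\Omega$ by the subclass $\Omega'$ of these separable members, each $B\in\Omega'$ is unital and stably finite (so $T(B)$ is nonempty and compact) and retains uniform property $\Gamma$ by hypothesis, while $A\in{\rm WTA}\Omega'$. Theorem \ref{thm:3.1} now applies verbatim and yields that $A$ has uniform property $\Gamma$.

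The only genuinely substantive point I expect is the transfer of a tracial state (equivalently, of stable finiteness) from the building blocks to $A$; the remaining steps are bookkeeping about separability, compactness of the trace spaces, and the exclusion of the finite-dimensional case. Since this transfer is a standard feature of tracial approximation, no difficulty beyond Theorem \ref{thm:3.1} itself should arise.
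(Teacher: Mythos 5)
Your proposal matches the paper's (implicit) argument: Corollary \ref{cor:3.3} is obtained by noting that a simple unital $A\in{\rm TA}\Omega$ lies in ${\rm WTA}\Omega$ (the remark following Definition \ref{def:2.6}) and then applying Theorem \ref{thm:3.1}, and your additional verifications (separability, stable finiteness of $A$, infinite-dimensionality, and passing to the separable members of $\Omega$) are exactly the bookkeeping the paper leaves unstated. The only point worth flagging is that ``unital stably finite implies $T(B)\neq\emptyset$'' in general requires exactness (quasitraces versus traces), but the paper makes the identical move at the start of the proof of Theorem \ref{thm:3.1}, and in any case $T(B)\neq\emptyset$ and compact is already presupposed by the definition of uniform property $\Gamma$ via Theorem \ref{thm:2.11}.
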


    \begin{theorem} \label{thm:3.4}
    	Let $\Omega$ be a class of   unital nuclear
${\rm C^*}$-algebras  which have the new type of  tracial nuclear dimension at most $n$ (in the sense of Definition  \ref{def:2.9}).   Then $A$ has the  new type of tracial nuclear dimension at most $n$  for  any  simple  unital  ${\rm C^*}$-algebra $A\in{\rm  WTA}\Omega$.
    \end{theorem}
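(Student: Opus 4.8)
The plan is to follow the template of Theorem~\ref{thm:3.1}: use the $\mathrm{WTA}\Omega$ structure to write $1_A$ as a ``large'' part $e:=p-g$ that essentially sits inside a subalgebra $B\in\Omega$ plus a ``small'' remainder $1_A-e\precsim a'$, run Definition~\ref{def:2.9} inside $B$, and transport the resulting system $(F,\psi_B,\varphi_B)$ to all of $A$. First I would make the usual reductions: assume $\mathcal F=\{a_1,\dots,a_k\}$ consists of positive contractions, and --- since $A$ is simple and infinite-dimensional --- fix a nonzero positive $a'\in A$ with $\langle a'\rangle+\langle a'\rangle\le\langle a\rangle$ (split the infinite-dimensional hereditary subalgebra $\overline{aAa}$). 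I would then fix a small $\delta>0$ governing the square-root commutator estimates through Lemma~2.5.11 of \cite{L2} exactly as in Theorem~\ref{thm:3.1}, together with a small $\varepsilon'>0$. Applying the hypothesis $A\in\mathrm{WTA}\Omega$ to $(\mathcal F,\delta,a')$ produces $p,g,B$ with $e=p-g$, together with elements $a_j'\in B$ satisfying $\|e^{1/2}a_je^{1/2}-a_j'\|<2\varepsilon$ and $\|a_j-a_j'-(1_A-e)^{1/2}a_j(1_A-e)^{1/2}\|<2\varepsilon$, and with $1_A-e\precsim a'$, whence $1_A-p\precsim a'$ and $g\precsim a'$.

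Next I would invoke the new type of tracial nuclear dimension of $B$. Applying Definition~\ref{def:2.9} in $B$ to the finite positive set $\{a_1',\dots,a_k',g\}$, tolerance $\varepsilon'$, and the nonzero positive element $b:=g$, I obtain $F=F_0\oplus\cdots\oplus F_n$ and completely positive $\psi_B\colon B\to F$, $\varphi_B\colon F\to B$ meeting (1)--(4) of that definition in $B$. To build the system on $A$, I extend $\psi_B$ to a completely positive contraction $\tilde\psi_B\colon A\to F$ by Arveson's theorem ($F$ being finite-dimensional is injective), and set $\psi(x):=\tilde\psi_B(e^{1/2}xe^{1/2})$ and $\varphi:=\iota\circ\varphi_B$, where $\iota\colon B\hookrightarrow A$. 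Conditions (3) and (4) are then immediate: $\psi$ is a composition of contractive completely positive maps, and $\varphi|_{F_i}=\iota\circ\varphi_B|_{F_i}$ is completely positive contractive order zero. For condition (1), since $\varphi\psi(a_j)$ is within $O(\varepsilon)$ of $\varphi_B\psi_B(a_j')$ (here $\|\varphi_B\|\le n+1$) and condition (1) in $B$ gives $(a_j')'\in B_+$ with $\varphi_B\psi_B(a_j')\approx a_j'-(a_j')'$, the element $x_j':=(1_A-e)^{1/2}a_j(1_A-e)^{1/2}+(a_j')'\in A_+$ satisfies $\|a_j-x_j'-\varphi\psi(a_j)\|<\varepsilon$; this uses crucially that Definition~\ref{def:2.9} imposes no size constraint on $x_j'$.

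The crux is condition (2). Because $e^{1/2}1_Ae^{1/2}=e\in B$, one has $\varphi\psi(1_A)=\varphi_B\psi_B(e)$, and since $\varphi_B$ takes values in $B\subseteq pAp$ the defect splits orthogonally as $1_A-\varphi\psi(1_A)=(1_A-p)\oplus(p-\varphi_B\psi_B(e))$. Writing $e=p-g$ gives $p-\varphi_B\psi_B(e)=(1_B-\varphi_B\psi_B(1_B))+\varphi_B\psi_B(g)$. The first summand is controlled by condition (2) in $B$ with our choice $b=g$, namely $(1_B-\varphi_B\psi_B(1_B)-\varepsilon')_+\precsim g$. The second summand has no a priori Cuntz bound, and this is the main obstacle: $\varphi_B\psi_B$ need not respect Cuntz comparison. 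The observation that rescues the argument is that condition (1) in $B$ applied to $g$ yields $g'\in B_+$ with $\|g-g'-\varphi_B\psi_B(g)\|<\varepsilon'$, and since $g'\ge0$ this gives the one-sided estimate $\varphi_B\psi_B(g)\le g+\varepsilon'1_B$ --- enough, even though $g'$ itself is uncontrolled. Combining the two bounds by the standard functional-calculus manipulations of the Cuntz relation (as in Theorem~\ref{thm:2.1}(1)) yields $(p-\varphi_B\psi_B(e)-2\varepsilon')_+\precsim g\oplus g$, and therefore $(1_A-\varphi\psi(1_A)-2\varepsilon')_+\precsim(1_A-p)\oplus g\oplus g\precsim a'\oplus a'\precsim a$, using $1_A-e\precsim a'$ and $g\precsim a'$. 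Taking $2\varepsilon'\le\varepsilon$ delivers condition (2).

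Two degenerate situations should be dispatched separately. If $g=0$ then $e=p$ is a projection and we are in the tracial-approximation regime; here one feeds Definition~\ref{def:2.9} in $B$ a genuinely small nonzero $b\in B_+$ (available when $B$ is infinite-dimensional, and unnecessary when $B$ is finite-dimensional, since then one may take $\varphi_B\psi_B=\mathrm{id}$ so that the $B$-defect vanishes), and the factor-two split of $a$ is not even needed. The essential difficulty, and the only genuinely new point compared with the second-type result of \cite{FU}, is the treatment of the correction term $\varphi_B\psi_B(g)$ in condition (2) through the sign of $g'$; everything else is bookkeeping with the Cuntz relation parallel to Theorem~\ref{thm:3.1}.
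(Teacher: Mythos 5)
Your overall architecture coincides with the paper's: compress the finite set into $B$ by $e^{1/2}\,\cdot\,e^{1/2}$ with $e=p-g$, run Definition \ref{def:2.9} inside $B$, pull the resulting $(F,\psi_B,\varphi_B)$ back to $A$ by composing with a map $A\to F$ that extends $\psi_B$, and control the defect $1_A-\varphi\psi(1_A)=(1_A-p)+(p-\varphi_B\psi_B(p))+\varphi_B\psi_B(g)$ term by term. Your treatment of the third term --- using the positivity of the remainder $g'$ from condition (1) of Definition \ref{def:2.9} applied to $g$ to get $\varphi_B\psi_B(g)\le \varphi_B\psi_B(g)+g'\approx_{\varepsilon'} g$, hence $(\varphi_B\psi_B(g)-\varepsilon')_+\precsim g$ --- is exactly the paper's move with its element $\overline{\overline{g}}$, and your use of an Arveson extension of $\psi_B$ in place of the paper's nuclearity-based completely positive contraction $\psi''\colon A\to B$ (Theorem 2.3.13 of \cite{L2}) is a legitimate, if anything cleaner, variant.

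The genuine gap is in the element you feed to condition (2) of Definition \ref{def:2.9} inside $B$. You take $b:=g$; this works when $g\neq0$ (indeed $g\le 1_A-(p-g)\precsim a'$), but $g=0$ is not a removable corner case: it is precisely the situation $A\in{\rm TA}\Omega$ (cf.\ Corollary \ref{cor:3.6}), and your fallback does not survive scrutiny. A ``genuinely small nonzero $b\in B_+$'' is of no use unless it is Cuntz subequivalent \emph{in $A$} to the prescribed element, and for an arbitrary nonzero positive element of $B$ there is no reason for this to hold: $\overline{aAa}\cap B$ may be $\{0\}$, and simplicity of $A$ only yields $\langle b\rangle\le k\langle a\rangle$ for some $k$. (The finite-dimensional-$B$ escape via $\varphi_B\psi_B={\rm id}$ is fine but covers nothing of interest.) The mechanism the paper uses, and which you never invoke, is condition (4) of Definition \ref{def:2.6}: putting the target element into the finite set of the ${\rm WTA}\Omega$ approximation guarantees $\|(p-g)b_2(p-g)\|\ge 1-\varepsilon'$, so the element $b_2'\in B$ approximating $(p-g)^{1/2}b_2(p-g)^{1/2}$ has $(b_2'-3\varepsilon)_+$ \emph{nonzero} and, by Theorem \ref{thm:2.1}(1), Cuntz subequivalent to $b_2\precsim b$ in $A$; this compressed cut-down is what gets fed to condition (2) in $B$, and it works whether or not $g=0$. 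Your argument becomes correct once $b:=g$ is replaced by such an element (keeping your $g'$ trick for the $\varphi_B\psi_B(g)$ term).
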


    \begin{proof}
    	We must show that for any finite positive  subset $\mathcal{F}=\{a_1, a_2,$ $\cdots, a_k\}$ $\subseteq A$, for any $\varepsilon>0$, and for any nonzero positive  element $b\in A$, there exist a finite dimensional $\rm C^{*}$-algebra $F=F_{0}\oplus\cdots\oplus F_{n}$ and  completely positive maps $\psi:A\rightarrow F$,~ $\varphi:F\rightarrow A$ such that
    	
    	$(1)$ for any $x\in F$, there exists $\overline{x}\in A_+$, such that $\|x-\overline{x}-\varphi\psi(x)\|<\varepsilon$,
    	
    	$(2)$ $(1_{A}-\varphi\psi(1_{A})-\varepsilon)_+\precsim b$,
    	
    	$(3)$ $\|\psi\|\leq1$, and
    	
    	$(4)$ $\varphi|_{F_{i}}$ is a completely positive  contractive order zero map  for $i=1, \cdots, n$.
    	
    	By Lemma 2.3 of \cite{L2}, there exist positive elements $b_{1}, b_{2}\in A$ of norm one such that
    	$b_{1}b_{2}=0, b_{1}\sim b_{2}$ and $b_{1}+b_{2}\precsim b$.
    	
    	Given $\varepsilon'>0$, for $H=\mathcal{F}\cup\{b\}$, since $A\in {\rm WTA}\Omega$, there exist a   projection $p\in A$, an element $g\in A$ with $\|g\|\leq1$ and a unital $\rm C^{*}$-subalgebra $B$ of $A$ with $g\in B, 1_{B}=p$ and $B\in\Omega$ such that
    	
    	$(1)'$ $(p-g)x\in_{\varepsilon'}B, x(p-g)\in_{\varepsilon'}B$ for any $x\in H$,
    	
    	$(2)'$ $\|(p-g)x-x(p-g)\|<\varepsilon'$ for any $x\in H$,
    	
    	$(3)'$ $1_{A}-(p-g)\precsim b_{1}\sim b_{2}$, and
    	
    	$(4)'$ $\|(p-g)b(p-g)\|\geq 1-\varepsilon'$.
    	
    	By $(2)'$ and Lemma 2.5.11 of \cite{L2}, with sufficiently small $\varepsilon'$, we can get
    	
    	$(5)'$ $\|(p-g)^{\frac{1}{2}}x-x(p-g)^{\frac{1}{2}}\|<\varepsilon$ for any $x\in H$, and
    	
    	$(6)'$ $\|(1_{A}-(p-g))^{\frac{1}{2}}x-x(1_{A}-(p-g))^{\frac{1}{2}}\|<\varepsilon$ for any $x\in H$.
    	
    	By $(1)'$ and $(5)'$, with sufficiently small $\varepsilon'$, there exist positive  elements $a'_{1}, \cdots, a'_{n}\in B$
    	and a positive element $b_{2}'\in B$ such that

    $\|(p-g)^{\frac{1}{2}}a_{i}(p-g)^{\frac{1}{2}}-a'_{i}\|<\varepsilon$ for $1\leq i\leq k$, and
    $\|(p-g)^{\frac{1}{2}}b_{2}(p-g)^{\frac{1}{2}}-b_{2}'\|<\varepsilon$.
    	
    	Therefore, one has
    	
    	 $\|a_{i}-a'_{i}-(1_{A}-(p-g))^{\frac{1}{2}}a_{i}(1_{A}-(p-g))^{\frac{1}{2}}\|$
    	
    	$\leq\|a_{i}-(p-g)a_{i}-(1_{A}-(p-g))a_{i}\|+\|(p-g)a_{i}-a'_{i}\|$
    	
    $+\|(p-g)^{\frac{1}{2}}a_{i}(p-g)^{\frac{1}{2}}-(p-g)a_{i}\|$

    	 $+\|(1_{A}-(p-g))a_{i}-(1_{A}-(p-g))^{\frac{1}{2}}a_{i}(1_{A}-(p-g))^{\frac{1}{2}}\|$
    	
    	$<\varepsilon+\varepsilon+\varepsilon+\varepsilon=4\varepsilon$ for $1\leq i\leq k$.

    	Since $\|(p-g)^{\frac{1}{2}}b_{2}(p-g)^{\frac{1}{2}}-b_{2}'\|<\varepsilon$, by (1) of Theorem \ref{thm:2.1}, we have $$(b_{2}'-3\varepsilon)_{+}\precsim((p-g)^{\frac{1}{2}}b_{2}(p-g)^{\frac{1}{2}}-2\varepsilon)_{+}. ~~~ \hspace{0.4cm}\textbf{(3.4.1)}$$
    	By $(4)'$, one has  $$\|(p-g)^{\frac{1}{2}}b_{2}(p-g)^{\frac{1}{2}}\|
    	\geq\|(p-g)b_{2}(p-g)\|\geq1-\varepsilon.$$
    	
    	Therefore, we have $\|(b_{2}'-3\varepsilon)_{+}\|\geq\|(p-g)^{\frac{1}{2}}b_{2}(p-g)^{\frac{1}{2}}\|+4\varepsilon\geq1-5\varepsilon$,
    	then, with $0<\varepsilon'<\varepsilon<\frac{1}{5}$, $(b_{2}'-3\varepsilon)_{+}\neq0$.
    	
    	Define a  completely positive contractive map $\varphi'':A\rightarrow A$ by $\varphi''(a)=(1_{A}-(p-g))^{\frac{1}{2}}a(1_{A}-(p-g))^{\frac{1}{2}}$.
    	Since $B$ is a nuclear $\rm C^{*}$-algebra, by Theorem 2.3.13 of \cite{L2}, there exists a contractive completely positive map $\psi'':A\rightarrow B$
    	such that $\|\psi''(p-g)-(p-g)\|<\varepsilon$,  and $\|\psi''(a'_{i})-a'_{i}\|<\varepsilon$ for all $1\leq i\leq n$.
    	
    	Since $B\in\Omega$, then $B$ has the new  type of   tracial nuclear dimension at most $n$, there exist a finite
    	dimension $\rm C^{*}$-algebra $F=F_{0}\oplus\cdots\oplus F_{n}$ and completely positive maps $\psi':B\rightarrow F$,
    	$\varphi':F\rightarrow B$ such that
    	
    	$(1)''$  for any $a_{i}'$ ($1\leq i\leq k$), there exists $\overline{\overline{a_{i}'}}\in B_+$, such that $\|a_{i}'-\overline{\overline{a_{i}'}}-\varphi'\psi'(a_{i}')\|<\varepsilon$, and for  $g\in B_+$, there exists $\overline{\overline{g}}\in B$, such that $\|g-\overline{\overline{g}}-\varphi'\psi'(g)\|<\varepsilon. ~~~~~~\hspace{0.4cm}\textbf{(3.4.2)}$
    	
    	$(2)''$ $(p-\varphi'\psi'(p)-\varepsilon)_+\precsim (b_{2}'-3\varepsilon)_{+}$,
    	
    	$(3)''$ $\|\psi'\|\leq1$, and
    	
    	$(4)''$ $\varphi'|_{F_{i}}$ is  a  completely positive contractive order zero map for $i=1, \cdots, n$.
    	
    	Define $\varphi:F\rightarrow A$ by $\varphi(a)=\varphi'(a)$ and $\psi:A\rightarrow F$ by
    	$\psi(a)=\psi'\psi''((p-g)^{\frac{1}{2}}a(p-g)^{\frac{1}{2}})$ and
    	$\overline{a_{i}}=\varphi''(a_{i})+\overline{\overline{a_{i}'}}\in A_+$ for $1\leq i\leq k$.
    	
    	Then, one has
    	
    	$(1_{A}-\varphi\psi(1_{A})-4\varepsilon)_+$

    $= (1_{A}-\varphi'\psi'\psi''(p-g)-4\varepsilon)_+$

    $\precsim 1_{A}-(\varphi'\psi'(p)-2\varepsilon)_++\varphi'\psi'(g)$

        $\precsim (1_{A}-p)+(p-\varphi'\psi'(p)-\varepsilon)_++\varphi'\psi'(g)$

    	$\precsim (1_{A}-p)+(p-\varphi'\psi'(p)-\varepsilon)_++\varphi'\psi'(g)+(\overline{\overline{g}}-\varepsilon)_+$
    	
    	$\precsim (1_{A}-p)+ g+(p-\varphi'\psi'(p)-\varepsilon)_+$ ~ (by \textbf{(3.4.2)})
    	
    	$\precsim b_{1}\oplus(b_{2}'-3\varepsilon)_{+}$ ~(by $(1)'$)
    	
        $\precsim b_{1}\oplus((p-g)^{\frac{1}{2}}b_{2}(p-g)^{\frac{1}{2}}-2\varepsilon)_{+}$~ (by \textbf{(3.4.1)})

        $\precsim b_{1}+b_{2}\precsim b$.
    	
    	One  also has
    	
    	$\|a_{i}-\overline{a_{i}}-\varphi\psi(a_{i})\|=\|a_{i}-\varphi''(a_{i})-
    	 \overline{\overline{a_{i}'}}-\varphi'\psi'\psi''((p-g)^{\frac{1}{2}}a_{i}(p-g)^{\frac{1}{2}})\|$
    	
    	$\leq\|a_{i}-(1_{A}-(p-g))^{\frac{1}{2}}a_{i}(1_{A}-(p-g))^{\frac{1}{2}}-
    	 \overline{\overline{a_{i}'}}-\varphi'\psi'\psi''((p-g)^{\frac{1}{2}}a_{i}(p-g)^{\frac{1}{2}})\|$
    	
    	 $\leq\|a_{i}-a'_{i}-(1_{A}-(p-g))^{\frac{1}{2}}a_{i}(1_{A}-(p-g))^{\frac{1}{2}}\|+
    	 \|a'_{i}-\overline{\overline{a_{i}'}}-\varphi'\psi'\psi''((p-g)^{\frac{1}{2}}a_{i}(p-g)^{\frac{1}{2}})\|$
    	    	$\leq3\varepsilon+
      	\|a'_{i}-\overline{\overline{a_{i}'}}-\varphi'\psi'(a_{i}')\|$
    	
    	$+\|\varphi'\psi'(a_{i}')-
    	\varphi'\psi'\psi''(a_{i}')\|+\| \varphi'\psi'\psi''(a_{i}')-\varphi'\psi'\psi''((p-g)^{\frac{1}{2}}a_{i}(p-g)^{\frac{1}{2}})\|$
    	    	$<3\varepsilon+2\varepsilon+2\varepsilon+2\varepsilon=9\varepsilon$.
    	
    	Since $\varphi'', \varphi', \psi', \psi''$ are  completely positive contractive maps, then $\varphi$ and $\psi$ are completely positive maps.
    	
    	For $(3)''$, $\varphi'|_{F_{i}}$ is a  completely positive contractive order zero map for $i=0, 1, \cdots, n$ and $\varphi(a)=\varphi'(a)$, then
    	$\varphi|_{F_{i}}$ is contractive completely positive order zero map for $i=0, 1,\cdots, n$.
    	
    For any $x\in A$,	 $\|\psi(x)\|=\|\psi'\psi''((p-g)^{\frac{1}{2}}x(p-g)^{\frac{1}{2}})\|\leq\|\psi'\|\|\psi''\|\|x\|$, then $\|\psi\|\leq\|\psi''\|\|\psi'\|\leq1$.
    	
    	Therefore, $A$ has the new  type of   tracial nuclear dimension at most $n$.
    \end{proof}

\begin{corollary}\label{cor:3.5} Let $A$ be a  unital simple  ${\rm C^*}$-algebra. Let $B\subseteq A$ be a centrally large nuclear subalgebra of $A$ such that $B$ has the new type of   tracial nuclear dimension at most $n$. Then   $A$ has the new type of  tracial nuclear dimension at most $n$.\end{corollary}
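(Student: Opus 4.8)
The plan is to deduce this as an immediate application of Theorem \ref{thm:3.4}, taking the singleton class $\Omega=\{B\}$. First I would observe that, by hypothesis, $B$ is a unital nuclear ${\rm C^*}$-algebra having the new type of tracial nuclear dimension at most $n$; hence the one-element class $\Omega=\{B\}$ satisfies precisely the standing assumption of Theorem \ref{thm:3.4}, namely that it consists of unital nuclear ${\rm C^*}$-algebras each with the new type of tracial nuclear dimension at most $n$.

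Next I would invoke the remark recorded immediately after Definition \ref{def:2.6}, which asserts that whenever $\Omega=\{B\}$ and $B\subseteq A$ is a centrally large subalgebra of $A$, one has $A\in{\rm WTA}\Omega$. Since $A$ is unital and simple and $B$ is by assumption a centrally large subalgebra of $A$, this remark gives $A\in{\rm WTA}\Omega$ for the class $\Omega=\{B\}$ just constructed.

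With these two observations in place, Theorem \ref{thm:3.4} applies directly to $A\in{\rm WTA}\Omega$ and yields that $A$ has the new type of tracial nuclear dimension at most $n$, which is the assertion of the corollary. There is no genuine obstacle here: the only points requiring (routine) verification are that the nuclearity and dimension hypotheses imposed on $B$ are exactly those demanded of the class $\Omega$, and that the passage from ``centrally large subalgebra'' to membership in ${\rm WTA}\Omega$ is legitimate. Both are supplied directly by the hypotheses and by the preliminary remarks, so the corollary is simply the specialization of Theorem \ref{thm:3.4} to a one-point approximating class.
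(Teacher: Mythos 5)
Your proposal is correct and is exactly the intended derivation: the paper states Corollary \ref{cor:3.5} without a separate proof, relying precisely on the remark after Definition \ref{def:2.6} (a centrally large subalgebra $B\subseteq A$ gives $A\in{\rm WTA}\{B\}$) together with Theorem \ref{thm:3.4} applied to the singleton class $\Omega=\{B\}$. The only implicit point, which is harmless, is that a centrally large subalgebra is by definition unital with $1_B=1_A$, so $\{B\}$ indeed meets the hypotheses of Theorem \ref{thm:3.4}.
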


\begin{corollary}\label{cor:3.6} Let $\Omega$ be a class of   unital nuclear
${\rm C^*}$-algebras such that for  any $B\in \Omega$,  $B$ has the new type of  tracial nuclear dimension at most $n$. Then $A$ has the new type of  tracial nuclear dimension at most $n$ for  any  simple  unital ${\rm C^*}$-algebra $A\in \rm{TA}\Omega$.\end{corollary}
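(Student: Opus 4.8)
The plan is to obtain this corollary as an immediate specialization of Theorem \ref{thm:3.4}, using the fact that tracial approximation is a special case of weak tracial approximation. Concretely, the excerpt records right after Definition \ref{def:2.6} that for a simple unital ${\rm C^*}$-algebra $A$, membership $A\in{\rm TA}\Omega$ already forces $A\in{\rm WTA}\Omega$ (this containment follows from the proof of Theorem 4.1 of \cite{EZ}). Thus the hypothesis $A\in{\rm TA}\Omega$ appearing in the corollary is strictly stronger than the hypothesis $A\in{\rm WTA}\Omega$ demanded by the theorem, so no new construction or estimate is required; the whole point is to reduce to a result already in hand.

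First I would check that the standing assumption on $\Omega$ matches exactly. The corollary assumes $\Omega$ is a class of unital nuclear ${\rm C^*}$-algebras each of which has the new type of tracial nuclear dimension at most $n$ (in the sense of Definition \ref{def:2.9}); this is precisely the hypothesis placed on $\Omega$ in Theorem \ref{thm:3.4}. Next, given any simple unital ${\rm C^*}$-algebra $A\in{\rm TA}\Omega$, I would invoke the containment ${\rm TA}\Omega\subseteq{\rm WTA}\Omega$ to deduce $A\in{\rm WTA}\Omega$, and then apply Theorem \ref{thm:3.4} verbatim to conclude that $A$ has the new type of tracial nuclear dimension at most $n$. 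This is the entire argument.

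There is essentially no genuine obstacle here: all the analytic content is carried by Theorem \ref{thm:3.4}, and the only supplementary ingredient is the already-cited inclusion ${\rm TA}\Omega\subseteq{\rm WTA}\Omega$. The single point worth a moment's verification is that the simplicity and unitality conditions line up, so that the two results can be chained without a hypothesis gap: Theorem \ref{thm:3.4} requires $A$ to be simple and unital, whereas the members of ${\rm TA}\Omega$ are by definition simple, unital (and separable). Since these conditions are compatible, the passage from ${\rm TA}\Omega$ to ${\rm WTA}\Omega$ and the subsequent application of the theorem are both unobstructed, and the corollary follows at once.
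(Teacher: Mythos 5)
Your proposal is correct and matches the paper's intended derivation exactly: the paper states Corollary \ref{cor:3.6} as an immediate consequence of Theorem \ref{thm:3.4} via the inclusion ${\rm TA}\Omega\subseteq{\rm WTA}\Omega$ recorded after Definition \ref{def:2.6}. Nothing further is needed.
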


 \begin{theorem}\label{thm:3.7}
   Let $\Omega$ be a class of unital $\rm C^{*}$-algebras  such that $B$ is  weakly $(m, n)$-divisible (with $n\neq m$) (see Definition \ref{def:2.2}) for any $B\in \Omega$. Let $A\in {\rm  WTA}\Omega$
    	be a simple unital stably finite $\rm C^{*}$-algebra such that for any integer $n\in \mathbb{N}$ the $\rm C^{*}$-algebra ${\rm M}_{n}(A)$	 belongs to the class ${\rm WTA}\Omega$. Then $A$ is secondly weakly $(m, n)$-divisible (with $n\neq m$) (see Definition \ref{def:2.3}).
    \end{theorem}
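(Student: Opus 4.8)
The plan is to reduce to the case where $a$ is a positive contraction in $M_N(A)$ for some $N$, fix $\varepsilon>0$, and produce the required $x_1,\dots,x_n\in M_\infty(A)_+$ by transporting the weak $(m,n)$-divisibility of a subalgebra $B\in\Omega$ of $M_N(A)$ across the WTA approximation. The entire argument takes place inside $M_N(A)$, which is again simple, unital and stably finite and which lies in $\mathrm{WTA}\Omega$ by hypothesis; this is exactly where the assumption $M_n(A)\in\mathrm{WTA}\Omega$ for all $n$ is used.

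The conceptually decisive first step is to manufacture the ``second copy'' of $\langle a\rangle$. Applying Lemma 2.3 of \cite{L2} repeatedly (halving $a$, then halving the result, $k$ times until $2^k\ge m$) I obtain a nonzero positive element $c\in M_N(A)$ with $m\langle c\rangle\le 2^k\langle c\rangle\le\langle a\rangle$. I then feed $c$ into the WTA approximation as the ``small'' element: for a small tolerance $\delta$ and finite set $\{a\}$, membership $M_N(A)\in\mathrm{WTA}\Omega$ yields a projection $p$, a positive contraction $g\in B$, and a unital subalgebra $B\subseteq M_N(A)$ with $1_B=p$, $g\in B$, $B\in\Omega$, satisfying $\|(p-g)a-a(p-g)\|<\delta$ and, crucially, $1_{M_N(A)}-(p-g)\precsim c$. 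Writing $e=p-g$ (so $0\le e\le1$), Lemma 2.5.11 of \cite{L2} upgrades the commutation to $\|e^{1/2}a-ae^{1/2}\|<\varepsilon$ and $\|(1-e)^{1/2}a-a(1-e)^{1/2}\|<\varepsilon$, hence $\|a-e^{1/2}ae^{1/2}-(1-e)^{1/2}a(1-e)^{1/2}\|<\delta'$. Put $a_0=e^{1/2}ae^{1/2}$ and $a_1=(1-e)^{1/2}a(1-e)^{1/2}$; property $(1)$ of Definition \ref{def:2.6} lets me approximate $a_0$ by a positive $a'\in B$ with $\|a_0-a'\|<\eta$.

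Now I invoke the hypothesis on $\Omega$: since $B$ is weakly $(m,n)$-divisible (Definition \ref{def:2.2}), applying it to $(a'-\eta)_+\in B_+$ produces $y_1,\dots,y_n\in M_\infty(B)_+\subseteq M_\infty(A)_+$ with $m\langle y_j\rangle\le\langle(a'-\eta)_+\rangle$ and $\langle(a'-\eta-\eta_1)_+\rangle\le\sum_j\langle y_j\rangle$; these Cuntz relations persist in $\mathrm{Cu}(M_N(A))=\mathrm{Cu}(A)$ since the witnessing elements lie in $M_\infty(B)$. Because $a_0=e^{1/2}ae^{1/2}\sim a^{1/2}ea^{1/2}\le a$, we have $(a'-\eta)_+\precsim a_0\precsim a$, so $m\langle y_j\rangle\le\langle a\rangle$. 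I then set $x_1=y_1\oplus a_1$ and $x_j=y_j$ for $2\le j\le n$. For the divisibility bound, $a_1\precsim 1-e=1_{M_N(A)}-(p-g)\precsim c$ gives $m\langle a_1\rangle\le m\langle c\rangle\le\langle a\rangle$, whence $m\langle x_1\rangle\le\langle a\rangle+\langle a\rangle=2\langle a\rangle$ and $m\langle x_j\rangle\le\langle a\rangle\le2\langle a\rangle$, which is the first condition of Definition \ref{def:2.3}.

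For the covering inequality I argue, as in the proof of Theorem \ref{thm:3.4} and via Theorem \ref{thm:2.1}$(1)$, that $(a-\varepsilon)_+\precsim(a'-\eta')_+\oplus a_1$ for a suitable $\eta'$ with $\eta+\eta_1\le\eta'\le\varepsilon/2-\eta$: in the exactly commuting model the support of $(a-\varepsilon)_+$ is contained in $\{a_0>\varepsilon/2\}\cup\{a_1>\varepsilon/2\}$, and approximate commutation makes this rigorous after slightly enlarging the cuts. Combining with $(a'-\eta')_+\precsim(a'-\eta-\eta_1)_+\precsim\bigoplus_j y_j$ yields $\langle(a-\varepsilon)_+\rangle\le\sum_j\langle y_j\rangle+\langle a_1\rangle=\sum_j\langle x_j\rangle$, the second condition of Definition \ref{def:2.3}. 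The main obstacle is precisely this last step: since $a_0$ and $a_1$ are only approximately, and not orthogonally, related to $a$, one must carefully propagate the cut $(\,\cdot\,-\varepsilon)_+$ through the approximate decomposition and match it to the tolerance $\eta_1$ coming from the divisibility of $B$, keeping $\delta,\delta',\eta,\eta_1$ all far below $\varepsilon$. Once this $\varepsilon$-bookkeeping is arranged, both conditions hold and $A$ is secondly weakly $(m,n)$-divisible.
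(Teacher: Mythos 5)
Your proposal is correct, but it takes a genuinely different --- and considerably shorter --- route than the paper. In the paper's proof the first application of the ${\rm WTA}\Omega$ condition places no Cuntz-semigroup control on the remainder $a''=(1-(p-g))^{1/2}a(1-(p-g))^{1/2}$; instead the authors apply the approximation a \emph{second} time to $a''$, extract a second family $x''_i$ from weak divisibility of a second subalgebra $D\in\Omega$, set $x_i=x'_i\oplus x''_i$, and must then absorb the residual piece $1-(p_1-g_1)$ into $\sum_i\langle x_i\rangle$. Manufacturing the strict room needed for that absorption is what forces their lengthy case analysis (whether $(a'-2\varepsilon)_+$, $(a'-3\varepsilon)_+$ and the $x'_i$ are Cuntz equivalent to projections or purely positive, via Theorem \ref{thm:2.1}(2),(3)), and it is the only place where stable finiteness and $m\neq n$ are used. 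Your idea of pre-engineering the cutting element --- iterating the halving lemma to get a nonzero $c$ with $m\langle c\rangle\le\langle a\rangle$ and feeding $c$ into condition (3) of Definition \ref{def:2.6} --- lets you dump the entire tail $a_1\precsim 1-(p-g)\precsim c$ into a single $x_j$ and pay for $m\langle a_1\rangle\le\langle a\rangle$ with exactly the extra copy of $\langle a\rangle$ that Definition \ref{def:2.3} allows; the covering inequality then follows from Theorem \ref{thm:2.1}(1) together with the standard estimate $(x+y-t)_+\precsim(x-t)_+\oplus y$, with no case analysis and no use of stable finiteness or of $m\neq n$, so you in fact prove the statement under weaker hypotheses. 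The one caveat is that the iterated halving requires $M_N(A)$ to be simple and non-elementary, i.e.\ $A$ infinite-dimensional; this is not stated in Theorem \ref{thm:3.7}, but the paper invokes the same halving lemma in the proof of Theorem \ref{thm:3.4} under the same silent assumption, and the finite-dimensional case is degenerate, so this is harmless provided you record the assumption.
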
  \begin{proof}
   For  given $a\in {\rm M}_{\infty}(A)_{+},~\varepsilon>0$, we may assume that $a\in A_+$, and
    $\|a\|=1$, (we have replaced ${\rm M}_{n}(A)$ containing $a$ given initially by $A$), we must show that there are  $x_{1}, x_{2}, \cdots, x_{n}\in {\rm M}_{\infty}(A)_{+}$ such that
    $m\langle x_{j}\rangle \leq \langle a \rangle +\langle a \rangle$,
   and $\langle (a-\varepsilon)_{+} \rangle\leq\langle x_{1} \rangle+\langle x_{2} \rangle+\cdots+\langle x_{n} \rangle$ for all $j=1, 2, \cdots, n$.

    For any $\delta_{1}>0$, since $A\in {\rm WTA}\Omega$, there exist a projection $p\in A$, an element $g\in A$ with $0\leq g \leq1$,
    and a $\rm C^{*}$-subalgebra $B$ of $A$ with $g\in B$, $1_{B}=p$, and $B\in \Omega$ such that

    (1) $(p-g)a\in_{\delta_{1}}B$, and

    (2) $\|(p-g)a-a(p-g)\|<\delta_{1}$.

    By (2), with sufficiently small $\delta_{1}$, by Lemma 2.5.11 (1) of \cite{L2}, we have

    (3) $\|(p-g)^{\frac{1}{2}}a-a(p-g)^{\frac{1}{2}}\|<\varepsilon/3$, and

    (4) $\|(1-(p-g))^{\frac{1}{2}}a-a(1-(p-g))^{\frac{1}{2}}\|<\varepsilon/3$.

    By (1) and (2), with sufficiently small $\delta_{1}$, there exists a positive element $a^{'}\in B$ such that

    (5) $\|(p-g)^{\frac{1}{2}}a(p-g)^{\frac{1}{2}}-a^{'}\|<\varepsilon/3$.

    By (3), (4) and (5),

    $\|a-a^{'}-(1-(p-g))^{\frac{1}{2}}a(1-(p-g))^{\frac{1}{2}}\|$

    $\leq\|a-(p-g)a-(1-(p-g))a\|+\|(p-g)a-(p-g)^{\frac{1}{2}}a(p-g)^{\frac{1}{2}}\|$

    $+\|(1-(p-g))a-(1-(p-g))^{\frac{1}{2}}a(1-(p-g))^{\frac{1}{2}}\|$

    $+\|(p-g)^{\frac{1}{2}}a(p-g)^{\frac{1}{2}}-a^{'}\|$

    $<\varepsilon/3+\varepsilon/3+\varepsilon/3=\varepsilon$.

    Since $B$ is weakly $(m, n)$-divisible, and $(a^{'}-2\varepsilon)_{+}\in B_+$, there exist
    $x_{1}^{'}, x_{2}^{'},$ $\cdots, x_{n}^{'}\in B$, such that
    $\langle x_{j}^{'}\rangle +\langle x_{j}^{'}\rangle +\cdots+\langle x_{j}^{'}\rangle \leq\langle(a^{'}-2\varepsilon)_{+} \rangle $,
    and $\langle(a^{'}-3\varepsilon)_{+} \rangle\leq\sum_{i=1}^{n}\langle x_{i}^{'}\rangle$, where $\langle x_{j}^{'}\rangle $ repeats $m$ times.

    Since $B$ is weakly $(m, n)$-divisible, and $(a^{'}-\varepsilon)_{+}\in B$, there exist
    $y_{1}^{'}, y_{2}^{'},$ $\cdots, y_{n}^{'}\in B$, such that
    $\langle y_{j}^{'}\rangle +\langle y_{j}^{'}\rangle +\cdots+\langle y_{j}^{'}\rangle \leq\langle(a^{'}-\varepsilon)_{+} \rangle $,
    and $\langle(a^{'}-2\varepsilon)_{+} \rangle\leq\sum_{i=1}^{n}\langle y_{i}^{'}\rangle$, where $\langle y_{j}^{'}\rangle $ repeats $m$ times.

    Write $a^{''}=(1-(p-g))^{\frac{1}{2}}a(1-(p-g))^{\frac{1}{2}}$.

    We divide the proof into two cases.

    {\bf Case (1)} We assume that $(a^{'}-2\varepsilon)_{+}$ is Cuntz equivalent to a projection.

    {\bf (1.1)} We assume that $(a^{'}-3\varepsilon)_{+}$ is Cuntz equivalent to a projection.

    {\bf (1.1.1)} We assume that $(a^{'}-2\varepsilon)_{+}$ is Cuntz equivalent to  $(a^{'}-3\varepsilon)_{+}$.

    {\bf (1.1.1.1)} If  $x_{1}^{'}, x_{2}^{'}, \cdots, x_{n}^{'}\in B$ are all Cuntz equivalent to projections,
    and $\langle(a^{'}-3\varepsilon)_{+} \rangle\leq\sum_{i=1}^{n}\langle x_{i}^{'}\rangle$,
    then, by Theorem 2.1 (2), there exist some  integer $j$ and a nonzero projection $d$ such that $\langle x_{j}^{'}+d\rangle +\langle x_{j}^{'}+d\rangle+\cdots+\langle x_{j}^{'}+d\rangle \leq\langle(a^{'}-2\varepsilon)_{+} \rangle $, where $\langle x_{j}^{'}+d\rangle$ repeats $m$ times, otherwise, this contradicts the stably
    finiteness of $A$ (since $m\neq n$ and $\rm C^{*}$-algebra $A$ is stably finite).

    For any $\delta_{2}>0$, since $A\in {\rm WTA}\Omega$, there exist a projection $p_1\in A$, an element $g_{1}\in A$ with
    $0\leq g_{1} \leq1$, and a $\rm C^{*}$-subalgebra $D_1$ of $A$ with $g_{1}\in D_1$, $1_{D{_1}}=p_1$, and $D_1\in \Omega$ such that

    ($1^{'}$) $(p_1-g_{1})a^{''}\in_{\delta_{2}}D_1$,

    ($2^{'}$) $\|(p_1-g_{1})a^{''}-a^{''}(p_1-g_{1})\|<\delta_{2}$, and

    ($3^{'}$) $1-(p_1-g_{1})\preceq d$.

    By ($1^{'}$) and ($2^{'}$), with sufficiently small $\delta_{2}$, as above, via the analogues for (4), (5) and (6) for
    $a^{''},~ p_1,$ and $g_{1}$, there exist a positive element $a^{'''}\in D_1$, such that

    	$ \|(p_1-g_{1})^{\frac{1}{2}}a^{''}(p_1-g_{1})^{\frac{1}{2}}-a^{'''}\|<\varepsilon/3,$ and
   	$ \|a^{''}-a^{'''}-(1-(p_1-g_{1}))^{\frac{1}{2}}a^{''}(1-(p_1-g_{1}))^{\frac{1}{2}}\|<\varepsilon.$

    Since $D_1$ is weakly $(m, n)$-divisible, and  $(a^{'''}-2\varepsilon)_{+}\in D_1$, there exist positive elements
    $x_{1}^{''}, x_{2}^{''}, \cdots, x_{n}^{''}\in D_1$, such that

    $\langle x_{j}^{''}\rangle +\langle x_{j}^{''}\rangle +\cdots+\langle x_{j}^{''}\rangle \leq\langle(a^{'''}-2\varepsilon)_{+} \rangle $,
    and
  $\langle(a^{'''}-3\varepsilon)_{+} \rangle\leq\sum_{i=1}^{n}\langle x_{i}^{''}\rangle$, where $\langle x_{j}^{''}\rangle $ repeats $m$ times.

    Since $a^{'}\leq a^{'}+a^{''},$ then $\langle (a^{'}-\varepsilon)_{+}\rangle\leq \langle (a^{'}+a^{''}-\varepsilon)_{+}\rangle$, and $\|a-a^{'}-a^{''}\|<\varepsilon$, so  one has $\langle (a^{'}+a^{''}-\varepsilon)_{+}\rangle\leq\langle a\rangle$.

    Therefore, one has
    $$\langle (a^{'}-2\varepsilon)_{+}\rangle\leq\langle (a^{'}-\varepsilon)_{+}\rangle\leq\langle a\rangle.$$

    Write $x=(1-(p_1-g_{1}))^{\frac{1}{2}}a^{''}(1-(p_1-g_{1}))^{\frac{1}{2}}$.

    Since
    $a^{'''}\leq a^{'''}+x$, then $\langle (a^{'''}-2\varepsilon)_{+}\rangle\leq\langle (a^{'''}+x-\varepsilon)_{+}\rangle$, and
    $\|a^{''}-a^{'''}-x\|<\varepsilon$, which implies that
    $$\langle (a^{'''}-2\varepsilon)_{+}\rangle\leq\langle a^{''}\rangle \leq\langle a\rangle.$$

    Therefore, we have

    	$\langle (x_{j}^{'}\oplus d)\oplus x_{j}^{''}\rangle +\langle (x_{j}^{'}\oplus d)\oplus x_{j}^{''}\rangle +\cdots+\langle (x_{j}^{'}\oplus d)+ x_{j}^{''}\rangle$

    	$ \leq\langle (a^{'}-2\varepsilon)_{+}\rangle+\langle (a^{'''}-2\varepsilon)_{+}\rangle\leq
    	\langle a\rangle+\langle a\rangle$,
    where $\langle (x_{j}^{'}\oplus d)\oplus x_{j}^{''}\rangle $ repeats $m$ times, and

    	$\langle x_{i}^{'}\oplus x_{i}^{''}\rangle +\langle x_{i}^{'}\oplus x_{i}^{''}\rangle +\cdots\langle x_{i}^{'}\oplus x_{i}^{''}\rangle$,

    	$\leq\langle (a^{'}-2\varepsilon)_{+}\rangle+\langle (a^{'''}-2\varepsilon)_{+}\rangle\leq
    	\langle a\rangle+\langle a\rangle$,
    where $\langle x_{i}^{'}\oplus x_{i}^{''}\rangle $ repeats $m$ times for $1\leq i\leq n$ and $i\neq j$.

    We also have

    $\langle (a-10\varepsilon)_{+}\rangle$

    $\leq\langle (a^{'}-3\varepsilon)_{+}\rangle+\langle (a^{'''}-3\varepsilon)_{+}\rangle+\langle (1-(p_1-g_{1}))^{\frac{1}{2}}a^{''}(1-(p_1-g_{1}))^{\frac{1}{2}}\rangle$

    $\leq\langle (a^{'}-3\varepsilon)_{+}\rangle+\langle (a^{'''}-3\varepsilon)_{+}\rangle+\langle (1-(p_1-g_{1}))\rangle$

    $\leq\langle (a^{'}-3\varepsilon)_{+}\rangle+\langle (a^{'''}-3\varepsilon)_{+}\rangle+\langle d\rangle$

    $\leq\sum_{i=1, i\neq j}
    ^n\langle x_{i}^{'}\oplus x_{i}^{''}\rangle+\langle (x_{j}^{'}\oplus d)\oplus x_{j}^{''}\rangle.$

    There are the desired inequalities, with$\langle x_{i}^{'}\oplus x_{i}^{''}\rangle+\langle (x_{j}^{'}\oplus d)\oplus x_{j}^{''}\rangle$ in place of $\langle  x_{i}\rangle$, and $10\varepsilon$ in place of $\varepsilon$.

    {\bf (1.1.1.2)} If  $x_{1}^{'}, x_{2}^{'}, \cdots, x_{k}^{'}\in B$ are  projections, and
    $\langle(a^{'}-3\varepsilon)_{+} \rangle <\sum_{i=1}^{n}\langle x_{i}^{'}\rangle$,
    then, by Theorem 2.1 (2), there exists a nonzero projection $e$, such that $\langle(a^{'}-3\varepsilon)_{+} \rangle+\langle e\rangle \leq\sum_{i=1}^{n}\langle x_{i}^{'}\rangle$.

    As in the part (1.1.1.1), since  $A\in {\rm  WTA}\Omega$, there exist a projection $p_2\in A$, an element $g_{2}\in A$ with
    $0\leq g_{2} \leq1$, and a $\rm C^{*}$-subalgebra $D_2$ of $A$ with $g_{2}\in D_2$, $1_{D_{2}}=p_2$, and $D_2\in \Omega$, by $(1)'$,
    there exists a positive element $a^{4}\in D_2$, such that

    $1-(p_2-g_{2})\preceq e,$
    	 $\|(p_2-g_{2})^{\frac{1}{2}}a^{''}(p_2-g_{2})^{\frac{1}{2}}-a^{4}\|<\varepsilon/3,$ and

    	 $\|a^{''}-a^{4}-(1-(p_2-g_{2}))^{\frac{1}{2}}a^{''}(1-(p_2-g_{2}))^{\frac{1}{2}}\|<\varepsilon.$

Also as in the part (1.1.1.1),
we have $\langle (a^{4}-2\varepsilon)_{+}\rangle\leq \langle a\rangle.$

    Since $D_2$ is weakly $(m, n)$-divisible, $(a^{4}-2\varepsilon)_{+}\in D_2$, there exist
    $x_{1}^{4}, x_{2}^{4},$ $\cdots, x_{n}^{4}\in D_2$, such that

    $\langle x_{j}^{4}\rangle +\langle x_{j}^{4}\rangle +\cdots+\langle x_{j}^{4}\rangle \leq\langle(a^{4}-2\varepsilon)_{+} \rangle $,
    and

     $\langle(a^{4}-3\varepsilon)_{+} \rangle\leq\sum_{i=1}^{n}\langle x_{i}^{4}\rangle$, where $\langle x_{j}^{4}\rangle $ repeats $m$ times.

    Therefore, we have

    	$\langle x_{j}^{'}\oplus x_{j}^{4}\rangle +\langle x_{j}^{'}\oplus x_{j}^{4}\rangle +\cdots+\langle x_{j}^{'}\oplus x_{j}^{4}\rangle$

    	$\leq\langle (a^{'}-2\varepsilon)_{+}\rangle+\langle (a^{4}-2\varepsilon)_{+}\rangle\leq
    	\langle a\rangle+\langle a\rangle$,
    where $\langle x_{j}^{'}\oplus x_{j}^{4}\rangle $ repeats $m$ time for  $1\leq i\leq n$.

    We also have

    $\langle (a-10\varepsilon)_{+}\rangle$

    $\leq\langle (a^{'}-3\varepsilon)_{+}\rangle+\langle (a^{4}-3\varepsilon)_{+}\rangle+\langle (1-(p_{2}-g_{2}))^{\frac{1}{2}}a^{''}(1-(p_{2}-g_{2}))^{\frac{1}{2}}\rangle$

    $\leq\langle (a^{'}-3\varepsilon)_{+}\rangle+\langle (a^{4}-3\varepsilon)_{+}\rangle+\langle (1-(p_{2}-g_{2}))\rangle$

    $\leq\langle (a^{'}-3\varepsilon)_{+}\rangle+\langle (a^{4}-3\varepsilon)_{+}\rangle+\langle e\rangle$

    $ \leq\sum_{i=1}^{n}\langle x_{i}^{'}\oplus x_{i}^{4}\rangle.$

    There are the desired inequalities, with $\langle x_{i}^{'}\oplus x_{i}^{4}\rangle$ in place of $\langle  x_{i}\rangle$, and $10\varepsilon$ in place of $\varepsilon$.

    {\bf (1.1.1.3)} we assume that there is a purely positive element, and we assume $x_{1}^{'}$ is a purely positive element. As
    $\langle(a^{'}-2\varepsilon)_{+} \rangle \leq\sum_{i=1}^{n}\langle x_{i}^{'}\rangle$,
    for any $\varepsilon>0$, there exists $\delta>0$ such that $\langle(a^{'}-4\varepsilon)_{+} \rangle\leq \langle (x_{1}^{'}-\delta)_{+}\rangle \oplus\sum_{i=1}^{n}\langle x_{i}^{'}\rangle$. Since $x_{1}^{'}$ is a purely positive element,  by Theorem 2.1 (3), there exists a nonzero positive element $s$ such that
    $\langle(x_{1}^{'}-\delta)_{+} \rangle +\langle s\rangle\leq\langle x_{1}^{'}\rangle$.

    As in the part (1.1.1.1), since $A\in {\rm WTA}\Omega$, by $(1)'$, there exists a projection $p_3\in A$, an element $g_{3}\in A$ with
    $0\leq g_{3} \leq1$, and a $\rm C^{*}$-subalgebra $D_3$ of $A$ with $g_{3}\in D_3$, $1_{D_{3}}=p_3$, and $D_3\in \Omega$
    there exist a positive element $a^{5}\in D_3$, such that

        $1-(p_3-g_{3})\preceq s,$
    	 $\|(p_3-g_{3})^{\frac{1}{2}}a^{''}(p_3-g_{3})^{\frac{1}{2}}-a^{5}\|<\varepsilon/3,$ and

    	 $\|a^{''}-a^{5}-(1-(p_3-g_{3}))^{\frac{1}{2}}a^{''}(1-(p_3-g_{3}))^{\frac{1}{2}}\|<\varepsilon.$

Also as in the part (1.1.1.1),
we have $\langle (a^{5}-2\varepsilon)_{+}\rangle\leq \langle a\rangle.$

    Since $D_3$ is weakly $(m, n)$-divisible, $(a^{5}-2\varepsilon)_{+}\in D_3$, there exist
    $x_{1}^{5}, x_{2}^{5},$ $\cdots, x_{n}^{5}\in D_3$, such that

    $\langle x_{j}^{5}\rangle +\langle x_{j}^{5}\rangle +\cdots+\langle x_{j}^{5}\rangle \leq\langle(a^{5}-2\varepsilon)_{+} \rangle $,
    and

$\langle(a^{5}-3\varepsilon)_{+} \rangle\leq\sum_{i=1}^{n}\langle x_{i}^{5}\rangle$, where $\langle x_{j}^{5}\rangle $ repeats $m$ times.

    Therefore, we have

    	$\langle x_{j}^{'}\oplus x_{j}^{5}\rangle +\langle x_{j}^{'}\oplus x_{j}^{5}\rangle +\cdots+\langle x_{j}^{'}\oplus x_{j}^{5}\rangle$,

    	$\leq\langle (a^{'}-2\varepsilon)_{+}\rangle+\langle (a^{5}-2\varepsilon)_{+}\rangle\leq
    	\langle a\rangle+\langle a\rangle$,
    where $\langle x_{i}^{'}\oplus x_{i}^{5}\rangle $ repeats $m$ times for $1\leq  i\leq n$.

   We also have

    $\langle (a-10\varepsilon)_{+}\rangle$

    $\leq\langle (a^{'}-4\varepsilon)_{+}\rangle+\langle (a^{5}-3\varepsilon)_{+}\rangle+\langle (1-(p_3-g_{3}))^{\frac{1}{2}}a^{''}(1-(p_3-g_{3}))^{\frac{1}{2}}\rangle$

    $\leq\langle (a^{'}-4\varepsilon)_{+}\rangle+\langle (a^{5}-3\varepsilon)_{+}\rangle+\langle (1-(p_3-g_{3}))\rangle$

    $\leq\langle (a^{'}-4\varepsilon)_{+}\rangle+\langle (a^{5}-3\varepsilon)_{+}\rangle+\langle s\rangle$
    $\leq\sum_{i=1}^{n}\langle x_{i}^{'}\oplus x_{i}^{5}\rangle.$

    There are the desired inequalities, with $\langle x_{i}^{'}\oplus x_{i}^{5}\rangle$ in place of $\langle  x_{i}\rangle$, and $10\varepsilon$ in place of $\varepsilon$.

    {\bf (1.1.2)} we assume that exists nonzero projection $r$ such that  $\langle (a^{'}-3\varepsilon)_{+}\rangle
    +\langle r\rangle \leq\langle  (a^{'}-3\varepsilon)_{+} \rangle$.

    As in the part (1.1.1.1), as $A\in {\rm WTA}\Omega$, there exist a projection $p_4\in A$, an element $g_{4}\in A$ with
    $0\leq g_{4} \leq1$, and a $\rm C^{*}$-subalgebra $D_4$ of $A$ with $g_{4}\in D_4$, $1_{D_{4}}=p_4$, and $D_4\in \Omega$, by $(1)'$,
    there exists a positive element $a^{6}\in D_4$, such that

    $1-(p_4-g_{4})\preceq r,$
    	 $\|(p_4-g_{4})^{\frac{1}{2}}a^{''}(p_4-g_{4})^{\frac{1}{2}}-a^{6}\|<\varepsilon/3,$ and

    	 $\|a^{''}-a^{6}-(1-(p_4-g_{4}))^{\frac{1}{2}}a^{''}(1-(p_4-g_{4}))^{\frac{1}{2}}\|<\varepsilon.$

Also as in the part (1.1.1.1),
we have $\langle (a^{6}-2\varepsilon)_{+}\rangle\leq \langle a\rangle.$

    Since $D_4$ is weakly $(m, n)$-divisible, $(a^{6}-2\varepsilon)_{+}\in D_4$, there exist
    $x_{1}^{6}, x_{2}^{6},$ $\cdots, x_{n}^{6}\in D_4$, such that

    $\langle x_{j}^{6}\rangle +\langle x_{j}^{6}\rangle +\cdots+\langle x_{j}^{6}\rangle \leq\langle(a^{6}-2\varepsilon)_{+} \rangle $,
    and

  $\langle(a^{6}-3\varepsilon)_{+} \rangle\leq\sum_{i=1}^{n}\langle x_{i}^{6}\rangle$, where $\langle x_{j}^{6}\rangle $ repeats $m$ times.

    Therefore, we have

    	$\langle y_{i}^{'}\oplus x_{i}^{6}\rangle +\langle y_{i}^{'}\oplus x_{i}^{6}\rangle +\cdots+\langle y_{i}^{'}\oplus x_{i}^{6}\rangle$

    	$\leq\langle (a^{'}-\varepsilon)_{+}\rangle+\langle (a^{6}-2\varepsilon)_{+}\rangle\leq
    	\langle a\rangle+\langle a\rangle$,
    where $\langle y_{i}^{'}\oplus x_{i}^{6}\rangle $ repeats $m$ times for $1\leq i\leq n$.

    We also have

    $\langle (a-10\varepsilon)_{+}\rangle$

    $\leq\langle (a^{'}-3\varepsilon)_{+}\rangle+\langle (a^{6}-4\varepsilon)_{+}\rangle+\langle (1-(p_4-g_{4}))^{\frac{1}{2}}a^{''}(1-(p_4-g_{4}))^{\frac{1}{2}}\rangle$

    $\leq\langle (a^{'}-3\varepsilon)_{+}\rangle+\langle (a^{6}-4\varepsilon)_{+}\rangle+\langle (1-(p_4-g_{4}))\rangle$

    $\leq\langle (a^{'}-3\varepsilon)_{+}\rangle+\langle (a^{6}-4\varepsilon)_{+}\rangle+\langle r\rangle$
    $\leq\sum_{i=1}^{n}\langle y_{i}^{'}\oplus x_{i}^{6}\rangle.$

    There are the desired inequalities, with$\langle y_{i}^{'}\oplus x_{i}^{6}\rangle$ in place of $\langle  x_{i}\rangle$, and $10\varepsilon$ in place of $\varepsilon$.

    {\bf (1.2)} If $(a^{'}-3\varepsilon)_{+}$ is not Cuntz equivalent to a projection,  by Theorem 2.1 (3), then there is a nonzero positive
    element $d$ such that $\langle (a^{'}-4\varepsilon)_{+}\rangle+\langle d\rangle \leq\langle (a^{'}-3\varepsilon)_{+}\rangle$.

    As in the part (1.1.1.1), since  $A\in {\rm WTA}\Omega$, there exist a projection $p_5\in A$, an element $g_{5}\in A$ with
    $0\leq g_{5} \leq1$, and a $\rm C^{*}$-subalgebra $D_5$ of $A$ with $g_{5}\in D_5$, $1_{D_{5}}=p_5$, and $D_5\in \Omega$, by $(1)'$,
    there exists a positive element $a^{7}\in D_5$, such that

    $1-(p_5-g_{5})\preceq d,$
    	 $\|(p_5-g_{5})^{\frac{1}{2}}a^{''}(p_5-g_{5})^{\frac{1}{2}}-a^{7}\|<\varepsilon/3,$ and

    	 $\|a^{''}-a^{7}-(1-(p_5-g_{5}))^{\frac{1}{2}}a^{''}(1-(p_5-g_{5}))^{\frac{1}{2}}\|<\varepsilon.$

Also as in the part (1.1.1.1),
we have $\langle (a^{7}-2\varepsilon)_{+}\rangle\leq \langle a\rangle.$

    Since $D_5$ is weakly $(m, n)$-divisible, $(a^{7}-2\varepsilon)_{+}\in D_5$, there exist
    $x_{1}^{7}, x_{2}^{7},$ $\cdots, x_{n}^{7}\in D_5$, such that

    $\langle x_{j}^{7}\rangle +\langle x_{j}^{7}\rangle +\cdots+\langle x_{j}^{7}\rangle \leq\langle(a^{7}-2\varepsilon)_{+}\rangle$,
    and
    $\langle(a^{7}-3\varepsilon)_{+} \rangle\leq\sum_{i=1}^{n}\langle x_{i}^{7}\rangle$, where $\langle x_{j}^{7}\rangle$ repeats $m$ times.

    Therefore, we have

    	$\langle x_{j}^{'}\oplus x_{j}^{7}\rangle +\langle x_{j}^{'}\oplus x_{j}^{7}\rangle +\cdots+\langle x_{j}^{'}\oplus x_{j}^{7}\rangle$

    	$\leq\langle (a^{'}-2\varepsilon)_{+}\rangle+\langle (a^{7}-2\varepsilon)_{+}\rangle\leq
    	\langle a\rangle+\langle a\rangle$,
    where $\langle x_{j}^{'}\oplus x_{j}^{7}\rangle $ repeats $m$ times for $1\leq i\leq n$.

    We also have

    $\langle (a-10\varepsilon)_{+}\rangle$

    $\leq\langle (a^{'}-4\varepsilon)_{+}\rangle+\langle (a^{7}-4\varepsilon)_{+}\rangle+\langle (1-(p_5-g_{5}))^{\frac{1}{2}}a^{''}(1-(p_5-g_{5}))^{\frac{1}{2}}\rangle$

    $\leq\langle (a^{'}-4\varepsilon)_{+}\rangle+\langle (a^{7}-4\varepsilon)_{+}\rangle+\langle (1-(p_5-g_{5}))\rangle$

    $\leq\langle (a^{'}-4\varepsilon)_{+}\rangle+\langle (a^{7}-4\varepsilon)_{+}\rangle+\langle d\rangle$

    $\leq\sum_{i=1}^{n}\langle x_{i}^{'}\oplus x_{i}^{7}\rangle.$

    There are the desired inequalities, with $\langle x_{i}^{'}\oplus x_{i}^{7}\rangle$ in place of $\langle  x_{i}\rangle$, and $10\varepsilon$ in place of $\varepsilon$.

    {\bf Case(2)} If $(a^{'}-2\varepsilon)_{+}$ is not Cuntz equivalent to a projection, by Theorem 2.1 (3), there is a nonzero positive
    element $d$ such that $\langle (a^{'}-3\varepsilon)_{+}\rangle+\langle d\rangle \leq\langle (a^{'}-2\varepsilon)_{+}\rangle$.

    As in the part (1.1.1.1), since  $A\in {\rm WTA}\Omega$, there exist a projection $p_6\in A$, an element $g_{6}\in A$ with
    $0\leq g_{6} \leq1$, and a $\rm C^{*}$-subalgebra $D_6$ of $A$ with $g_{6}\in D_6$, $1_{D_{6}}=p_6$, and $D_6\in \Omega$, by $(1)'$,
    there exists a positive element $a^{8}\in D_6$, such that

    $1-(p_6-g_{6})\preceq d,$
    	 $\|(p_6-g_{6})^{\frac{1}{2}}a^{''}(p_6-g_{6})^{\frac{1}{2}}-a^{8}\|<\varepsilon/3,$ and

    	 $\|a^{''}-a^{8}-(1-(p_6-g_{6}))^{\frac{1}{2}}a^{''}(1-(p_6-g_{6}))^{\frac{1}{2}}\|<\varepsilon.$

Also as in the part (1.1.1.1),
we have $\langle (a^{8}-2\varepsilon)_{+}\rangle\leq \langle a\rangle.$

    Since $D_6$ is weakly $(m, n)$-divisible, $(a^{8}-2\varepsilon)_{+}\in D_6$, there exist
    $x_{1}^{8}, x_{2}^{8},$ $\cdots, x_{n}^{8}\in D_6$, such that

    $\langle x_{j}^{8}\rangle +\langle x_{j}^{8}\rangle +\cdots+\langle x_{j}^{8}\rangle \leq\langle(a^{8}-2\varepsilon)_{+} \rangle$,
    and
   $\langle(a^{8}-3\varepsilon)_{+} \rangle\leq\sum_{i=1}^{n}\langle x_{i}^{8}\rangle$, where $\langle x_{j}^{8}\rangle $ repeats $m$ times.

    Therefore, we have

    	$\langle y_{i}^{'}\oplus x_{i}^{8}\rangle +\langle y_{i}^{'}\oplus x_{i}^{8}\rangle +\cdots+\langle y_{i}^{'}\oplus x_{i}^{8}\rangle$

    	$\leq\langle (a^{'}-\varepsilon)_{+}\rangle+\langle (a^{8}-2\varepsilon)_{+}\rangle\leq
    	\langle a\rangle+\langle a\rangle$,
    where $\langle y_{i}^{'}\oplus x_{i}^{8}\rangle $ repeats $m$ times for $1\leq i\leq n$.

    We also have

    $\langle (a-10\varepsilon)_{+}\rangle$

    $\leq\langle (a^{'}-3\varepsilon)_{+}\rangle+\langle (a^{8}-4\varepsilon)_{+}\rangle+\langle (1-(p_6-g_{6}))^{\frac{1}{2}}a^{''}(1-(p_6-g_{6}))^{\frac{1}{2}}\rangle$

    $\leq\langle (a^{'}-3\varepsilon)_{+}\rangle+\langle (a^{8}-4\varepsilon)_{+}\rangle+\langle (1-(p_6-g_{6}))\rangle$

    $\leq\langle (a^{'}-3\varepsilon)_{+}\rangle+\langle (a^{8}-4\varepsilon)_{+}\rangle+\langle d\rangle$
    $\leq\sum_{i=1}^{n}\langle y_{i}^{'}\oplus x_{i}^{8}\rangle.$

    There are the desired inequalities, with $\langle y_{i}^{'}\oplus x_{i}^{8}\rangle$ in place of $\langle  x_{i}\rangle$, and $10\varepsilon$ in place of $\varepsilon$.
    \end{proof}

The following corollary were obtained by Fan, Fang,  and
Zhao in \cite{FFZ}.

\begin{corollary}\label{cor:3.8} Let $A$ be a unital simple stably finite  separable  ${\rm C^*}$-algebra. Let $B\subseteq A$ be a centrally large subalgebra of  $A$ such that $B$ is weakly $(m, n)$-divisible. Then   $A$ is  secondly weakly $(m, n)$-divisible.\end{corollary}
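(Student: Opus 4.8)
The plan is to use the WTA$\Omega$ structure to split a given positive element into a piece that essentially lives inside a member of $\Omega$ (where weak $(m,n)$-divisibility is available) and a complementary ``small'' piece, and then to repeat this splitting on the small piece. First I would reduce to $a\in A_+$ with $\|a\|=1$: any $a\in {\rm M}_\infty(A)_+$ lies in some ${\rm M}_k(A)_+$, and since the hypothesis guarantees ${\rm M}_k(A)\in{\rm WTA}\Omega$, the whole argument may be run with ${\rm M}_k(A)$ in place of $A$. Applying Definition \ref{def:2.6} to $\{a\}$ with a tolerance $\delta_1$ produces a projection $p$, an element $g$, and a subalgebra $B\in\Omega$; using Lemma 2.5.11 of \cite{L2} to make $(p-g)^{1/2}$ almost commute with $a$, I would produce $a'\in B$ with
$$\|a-a'-(1-(p-g))^{1/2}a(1-(p-g))^{1/2}\|<\varepsilon,$$
so that $a\approx a'+a''$ with $a''=(1-(p-g))^{1/2}a(1-(p-g))^{1/2}$ the small part. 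Feeding $(a'-2\varepsilon)_+$ into the weak $(m,n)$-divisibility of $B$ then yields $x_1',\dots,x_n'\in B$ with $m\langle x_j'\rangle\le\langle(a'-2\varepsilon)_+\rangle$ and $\langle(a'-3\varepsilon)_+\rangle\le\sum_i\langle x_i'\rangle$.

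The remaining task is to cover the small part $a''$ as well, and this is where the bound becomes $\langle a\rangle+\langle a\rangle$ rather than $\langle a\rangle$. I would apply Definition \ref{def:2.6} a second time, now to $a''$, obtaining a subalgebra $D\in\Omega$ and a decomposition $a''\approx a'''+e''$ with $a'''\in D$ and $e''$ smaller, and then apply the weak $(m,n)$-divisibility of $D$ to $(a'''-2\varepsilon)_+$ to get $x_1'',\dots,x_n''\in D$. Setting $x_i:=x_i'\oplus x_i''$, the doubling appears naturally: one copy of $\langle a\rangle$ bounds $\langle(a'-2\varepsilon)_+\rangle$ (since $(a'-2\varepsilon)_+$ is dominated by $a'+a''$, which approximates $a$) and the second copy bounds $\langle(a'''-2\varepsilon)_+\rangle$ (since $a'''\precsim a''\precsim a$), giving $m\langle x_j\rangle\le\langle a\rangle+\langle a\rangle$.

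The crux, and the step I expect to be the main obstacle, is arranging the covering inequality $\langle(a-O(\varepsilon))_+\rangle\le\sum_i\langle x_i\rangle$ to close up, because the second application of WTA$\Omega$ introduces an error term $1-(p_1-g_1)$ that must be absorbed. To absorb it I need a nonzero positive ``defect'' element $d$ that can serve as the dominating element in clause $(3)$ of Definition \ref{def:2.6} (so that $1-(p_1-g_1)\precsim d$) while still being swallowed by slack already present in the first-stage data. Producing such a $d$ is exactly where the hypotheses $m\neq n$ and stable finiteness are indispensable, through parts $(2)$ and $(3)$ of Theorem \ref{thm:2.1}. Here the argument must branch according to the Cuntz type of $(a'-2\varepsilon)_+$ and $(a'-3\varepsilon)_+$: when a relevant element is purely positive, Theorem \ref{thm:2.1}$(3)$ directly furnishes a nonzero $d$ with $\langle(a'-4\varepsilon)_+\rangle+\langle d\rangle\le\langle(a'-3\varepsilon)_+\rangle$; when all the $x_i'$ are equivalent to projections and the covering is tight, Theorem \ref{thm:2.1}$(2)$ together with the arithmetic incompatibility $m\neq n$ in a stably finite ordered semigroup forces either strict slack in the covering or the possibility of enlarging some $x_j'$ by a nonzero projection $d$ while preserving $m\langle x_j'\oplus d\rangle\le\langle(a'-2\varepsilon)_+\rangle$. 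In each branch I would attach $d$ to the coordinate that needs it (for instance $x_j'\oplus d\oplus x_j''$) and check that the divisibility bound is unaffected while $d$ now exactly accounts for the error term $1-(p_1-g_1)$.

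Finally I would assemble the estimates into a chain of the shape
$$\langle(a-10\varepsilon)_+\rangle\le\langle(a'-3\varepsilon)_+\rangle+\langle(a'''-3\varepsilon)_+\rangle+\langle 1-(p_1-g_1)\rangle\le\sum_i\langle x_i\rangle,$$
using $d$ to bound the last summand. Since $\varepsilon>0$ was arbitrary, rescaling $10\varepsilon$ to $\varepsilon$ yields secondly weak $(m,n)$-divisibility. The bookkeeping across the several subcases is routine once the defect $d$ is in hand; the genuine difficulty lies in establishing, uniformly across the Cuntz-type dichotomy, that such a nonzero $d$ exists, which is precisely the point where $m\neq n$ and stable finiteness are used.
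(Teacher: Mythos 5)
Your outline is correct and is essentially the paper's own argument: the paper obtains this corollary by specializing Theorem \ref{thm:3.7} (using that a centrally large subalgebra $B\subseteq A$ gives $A\in{\rm WTA}\{B\}$, and likewise for matrix amplifications), and your proposal simply inlines the proof of that theorem --- the same two-stage ${\rm WTA}$ splitting, the same production of the defect element $d$ via Theorem \ref{thm:2.1}(2)--(3) with the $m\neq n$/stable-finiteness dichotomy, and the same assembly $x_i=x_i'\oplus x_i''$ yielding $m\langle x_j\rangle\leq\langle a\rangle+\langle a\rangle$ and $\langle(a-10\varepsilon)_+\rangle\leq\sum_i\langle x_i\rangle$.
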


 \end{document}